\documentclass{arxiv} 

\newif\ifarxiv
\arxivtrue

\usepackage{enumitem}
\usepackage{subcaption}

\usetikzlibrary{external}
\tikzexternaldisable

\usetikzlibrary{intersections}
\pgfplotsset{
    colormap = {gradient}{
        color = (red!90!black) color = (red) color = (yellow) color = (blue) color = (blue!90!black)
    },
}

\newcommand{\smep}[2][i]{\mcl{B}_{#1}\mleft(#2\mright)}
\newcommand{\res}{\vc{r}}
\newcommand{\bound}{\mt{E}}
\newcommand{\err}[1]{\Delta #1}
\newcommand{\backward}[1]{\eta(#1)}
\newcommand{\cond}[1]{\kappa(#1)}
\newcommand{\per}[1]{\mcl{\widetilde{A}}\mleft(#1\mright)}
\renewcommand{\norm}[2][2]{\Vert#2\Vert_{#1}}

\title{Rectangular Multispectral Perturbation Theory\thanks{The work was supported in part by the KU Leuven: Research Fund (grants IBOF/23/064, C3/20/117, and C3I/21/00316); in part by the Fonds Wetenschappelijk Onderzoek (grants S005319 and T001919N); in part by the Departement Economie, Wetenschap \& Innovatie via the Flanders AI Research Program; in part by the Vlaams Agentschap Innoveren \& Ondernemen (grant HBC/2021/0076); and in part by the European Research Council (grant 885682).}}

\author[$\dagger$, $\ddagger$]{Christof Vermeersch}
\author[$\ddagger$]{Sarthak De}
\author[$\ddagger$]{Bart De Moor}

\affil[$\dagger$]{Corresponding author (\myemail)}
\affil[$\ddagger$]{Center for Dynamical Systems, Signal Processing, and Data Analytics (STADIUS), Dept. of Electrical Engineering (ESAT), KU Leuven, Kasteelpark Arenberg 10, 3001 Leuven, Belgium}

\begin{document}
    \maketitle
    
    \begin{abstract}
    We provide a first systematic treatment of so-called rectangular multispectral perturbation theory.
    With their paper from 2003, Hochstenbach and Plestenjak [``Backward Error, Condition Numbers, and Pseudospectra for the Multiparameter Eigenvalue Problem'' in Linear Algebra and its Applications] extended perturbation theory from one-parameter eigenvalue problems to multiple spectral parameters. 
    After two decades, we take it one step further and consider a different manifestation of the multiparameter eigenvalue problem that consists of one matrix equation with rectangular coefficient matrices.
    We perform a norm-wise backward error analysis, define condition numbers for both eigenvalues and eigenvectors, and introduce the pseudospectrum while also considering the computational implications of working with multiple spectral parameters.
    The rectangular shape hampers a direct application of the existing definitions and properties.
    For example, the left null space at a given eigenvalue is non-trivial and the dimensions of the left and right eigenvectors are different.
    Through numerical examples, we illustrate and link the different concepts from the perturbation theory.
    A system identification application seem to suggest that, in optimization-driven problems for which multiparameter reformulations exist, the globally optimal solutions tend to coincide with the best-conditioned eigenvalues.
\end{abstract}

\section{Introduction}

    When we want to know whether the computed eigenvalues of a rectangular multiparameter eigenvalue problem are satisfactory, stability and conditioning are two key concepts that come into the picture~\cite{trefethen1997numerical}.
    Consider the following example.
    \begin{example}
        \label{ex:conditionandstability}
        If we have rectangular coefficient matrices $\mat_0$, $\mat_1$, and $\mat_2$, then solving $\mep{\eig} \rvec = \left(\mat_0 + \eigcomp[1] \mat_1 + \eigcomp[2] \mat_2\right) \rvec = \vc{0}$ results in one or multiple eigenvalue solutions $\eig = \left(\eigcomp[1], \eigcomp[2]\right)$.
        In exact arithmetic we obtain an exact solution $\sol{\eig}$, and this is the end of the story.
        However, on computers, if we want to compute the solution of $\mep{\eig} \rvec = \vc{0}$ in floating-point arithmetic, the intermediate results are replaced by nearby machine numbers, causing rounding errors in the computed solution $\comp{\eig}$.
        Moreover, the values of the coefficient matrices cannot be represented exactly on the computer, so their entries are already replaced by machine numbers before the computations even start.
        So, we hope that our numerical algorithm computes a good approximation $\comp{\eig}$ of the true solution $\sol{\eig}$, in the sense that $\mep{\comp{\eig}} \comp{\rvec} \approx \vc{0}$, or even better $\comp{\eig} \approx \sol{\eig}$.
    \end{example}

    It is clear that the exact solution to the problem may be very sensitive to perturbations of the coefficient matrices and also the numerical algorithm may be influenced greatly by perturbations.
    A good approach for deciding whether the numerical algorithm performs well is to consider a slightly perturbed version of the problem. 
    Perturbation theory aims at deriving (backward) error expressions and condition numbers. 
    Backward errors help to assess the quality of the obtained numerical solutions and stability of the numerical algorithms, while condition numbers explain the sensitivity to perturbations in the input data~\cite{trefethen1997numerical}.
    Pseudospectra can help to reveal the conditioning of eigenvalues.
    \emph{The paper's goal is to provide a first systematic picture of rectangular multispectral perturbation theory, by performing a norm-wise backward error analysis, defining condition numbers for eigenvalues and eigenvectors, and introducing the pseudospectrum of rectangular multiparameter eigenvalue problems.}

    \subsection{Rectangular multiparameter eigenvalue problems}

        We focus on the (linear) rectangular multiparameter eigenvalue problem, which is defined as follows.
        \begin{definition}
            \label{def:mep}
            Given coefficient matrices $\mat_i \in \Cset^{k \times \ell}$ (with $k = \ell + m - 1$) that lead to a full normal rank ``multiparameter matrix pencil'' $\mep{\eig}$, the \emph{rectangular multiparameter eigenvalue problem (rMEP)} consists in finding all $m$-tuples $\eig = (\eigcomp[1], \eigcomp[2], \ldots, \lambda_m) \in \Cset^m$ and corresponding vectors $\rvec \in \Cset^\ell_0$, so that
            \begin{equation}
                \label{eq:mep}
                \mep{\eig} \rvec = \mat_0 \rvec + \sum_{i = 1}^m \eigcomp[i] \mat_i \rvec = \sum_{i = 0}^m \eigcomp[i] \mat_i \rvec = \vc{0},
            \end{equation}
            in which setting $\eigcomp[0] = 1$ allows for a more compact notation of the problem.
        \end{definition}

        \begin{remark}
            We restrict our investigation to the zero-dimensional components of the solution set. 
            The size condition on the coefficient matrices is a necessary (but not a sufficient) condition in order for the rMEP to have a zero-dimensional solution set: there are $k$ equations and one non-triviality constraint on $\rvec$ (e.g., $\norm{\rvec} = 1$) in $\ell + m$ unknowns ($\ell$ elements in the eigenvectors $\rvec$ and $m$ eigenvalues), thus $k + 1 \geq \ell + m$.
            Notice that for one-parameter eigenvalue problem, this condition boils down to having square coefficient matrices.
        \end{remark}

        An $m$-tuple $\sol{\eig}$ that satisfies~\eqref{eq:mep} is an eigenvalue. 
        Together with the associated eigenvector $\sol{\rvec}$, the eigenvalue forms an eigenpair $(\sol{\eig}, \sol{\rvec})$ of the rMEP.
        Another way to phrase the rMEP is by considering the eigenvalues for which the rank drops below the normal rank of the problem.
        \begin{corollary}
            \label{cor:rankdefinition}
            Given the problem in~\cref{def:mep}, the tuple $\sol{\eig} \in \Cset^m$ is an eigenvalue if
            \begin{equation}
                \sol{\rho} = \rank\mleft(\mep{\sol{\eig}}\mright) < \nrank\mleft(\mep{\eig}\mright), 
            \end{equation}
            where the normal rank $\rho = \nrank\mleft(\mep{\eig}\mright)$ is the rank attained at a generic point $\eig$.
        \end{corollary}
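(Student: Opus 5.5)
The corollary follows from \cref{def:mep} once we fix what ``full normal rank'' means for a tall pencil. Since $k = \ell + m - 1 \geq \ell$, full normal rank means $\rho = \nrank\mleft(\mep{\eig}\mright) = \ell$, the number of columns. The plan is to show that, at any point $\sol{\eig}$, a nonzero $\sol{\rvec}$ with $\mep{\sol{\eig}}\sol{\rvec} = \vc{0}$ exists exactly when the rank of $\mep{\sol{\eig}}$ falls below $\ell$. The argument is essentially rank--nullity, applied pointwise to the constant matrix $\mep{\sol{\eig}} \in \Cset^{k \times \ell}$.

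First, I will justify that the normal rank is well defined and equals the generic rank. The entries of $\mep{\eig}$ are affine in $\eig$, so every minor is a polynomial in $\eig$. The normal rank $\rho$ is the largest size $r$ for which some $r \times r$ minor is not the zero polynomial. Since the zero set of a nonzero polynomial is a proper algebraic subset of $\Cset^m$, the rank equals $\rho$ on a nonempty Zariski-open set of points. This is the sense of ``attained at a generic point.'' At every point the rank is at most $\rho$, because all minors of size larger than $\rho$ vanish identically.

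Second, I will apply rank--nullity at a fixed $\sol{\eig}$: $\dim \ker \mep{\sol{\eig}} = \ell - \rank\mleft(\mep{\sol{\eig}}\mright)$. If $\sol{\eig}$ is an eigenvalue, some $\sol{\rvec} \neq \vc{0}$ lies in this kernel, so $\sol{\rho} \leq \ell - 1 < \ell = \rho$. Conversely, if $\sol{\rho} < \rho = \ell$, the kernel is nontrivial, and any nonzero kernel vector serves as an eigenvector in the sense of~\eqref{eq:mep}. This gives the stated sufficient condition, and in fact an equivalence.

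The step that needs the most care is the identification $\rho = \ell$, which rests entirely on the full-normal-rank hypothesis of \cref{def:mep}. Without it, the kernel would be nontrivial at every $\eig$, and the characterization would have to be stated relative to $\rho$ rather than relative to the existence of a null vector. I will therefore state this use of the hypothesis explicitly. I will also note in passing that the left null space of $\mep{\sol{\eig}}$ is always nontrivial because $k > \ell$ when $m \geq 2$, which is why the criterion must be phrased through the rank drop and the right null space.
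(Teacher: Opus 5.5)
Your proof is correct: the full-normal-rank hypothesis in \cref{def:mep} forces $\rho = \ell$, and rank--nullity at the fixed point $\sol{\eig}$ then gives a nonzero right null vector exactly when the rank drops below $\ell$, which yields the stated condition and in fact an equivalence. The paper gives no separate proof and presents the corollary as a direct rephrasing of \cref{def:mep}, so your argument is the justification it leaves implicit.
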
 

        Next to the (right) eigenvector $\sol{\rvec}$ associated to an eigenvalue $\sol{\eig}$, there also exist vectors that solve the matrix equation $\lvec^\herm \mep{\eig} = \vc{0}$.
        Due to the rectangular matrix size of $\mep{\sol{\eig}}$, there are $\ell + m - \sol{\rho} - 1$ vectors in the left null space of $\mep{\sol{\eig}}$, of which $m - 1$ can be considered to be part of the trivial left null space that exists for any $\eig$.
        More information about the left null space and left eigenvectors is given in \cref{app:leftnullspace}.

        \begin{definition}
            \label{def:secularequations}
            Let $\sigma_1, \sigma_2, \ldots, \sigma_L$ denote the $L = \binom{k}{\ell}$ selections of $\ell$ rows from a set of $k$ rows.
            If we consider the multivariate linear matrix $\mep{\eig}$ as defined in \cref{def:mep}, then the \emph{secular equations} are given by
            \begin{equation}
                \label{eq:secularequations}
                \chi_i(\eig) = \det\left[\mep{\eig}\right]_{\sigma_i} = 0, \quad i = 1, 2, \ldots, L. 
            \end{equation}
            The common roots of the secular polynomials in~\eqref{eq:secularequations} correspond to the solutions of the rMEP.
        \end{definition}
        
        \begin{example}
            \label{ex:runningexample}
            A running example that we use throughout the paper is the two-parameter eigenvalue problem
            \begin{equation}
                \mep{\eig} \rvec = \left(\mat_0 + \eigcomp[1] \mat_1 + \eigcomp[2] \mat_2\right) \rvec = \vc{0}, 
            \end{equation}
            with coefficient matrices
            \begin{equation}
                \mat_0 = -
                \begin{bmatrix}
                    6 & 4 \\
                    0 & 2 \\
                    2 & 2 
                \end{bmatrix}, 
                \mat_1 = 
                \begin{bmatrix}
                    2 & 1 \\
                    0 & 0 \\
                    2 & 0 
                \end{bmatrix}, \eqand
                \mat_2 = 
                \begin{bmatrix}
                    2 & 1 \\
                    0 & 2 \\
                    0 & 2 
                \end{bmatrix}
            \end{equation}
            The secular equations of this problem are (after simplification) given by
            \begin{equation}
                \begin{aligned}
                    \chi_1(\eig) &= 1 - \eigcomp[1] - \eigcomp[2] + \eigcomp[1] \eigcomp[2] = 0, \\
                    \chi_2(\eig) &= 2 + 3 \eigcomp[1] - 7 \eigcomp[2] - \eigcomp[1]^2 + \eigcomp[1] \eigcomp[2] + 2 \eigcomp[2]^2 = 0, \\
                    \chi_3(\eig) &= 3 + \eigcomp[1] + 4 \eigcomp[2] - \eigcomp[1] \eigcomp[2] - \eigcomp[2]^2 = 0, 
                \end{aligned}
            \end{equation}
            and its three eigenvalues are $\sol{\eig}_{(1)} = (1, 2)$, $\sol{\eig}_{(2)} = (3, 1)$, and $\sol{\eig}_{(3)} = (1, 1)$.
            All are visualized in \cref{fig:runningexample}.
        \end{example} 

        \begin{figure}
            \centering
            \tikzexternalenable
            \begin{tikzpicture}[trim axis left, trim axis right]
    \begin{axis}[
        figurestyle,
        xmin= 0,
        xmax= 4,
        ymin= 0,
        ymax= 4,
        xlabel = {$\eigcomp[1]$},
        ylabel = {$\eigcomp[2]$}
        ]

        \addplot +[blueline, no markers,
            raw gnuplot,
            name path = P
            ] gnuplot {
            set contour base;
            set cntrparam levels discrete 0.0001;
            unset surface;
            set view map;
            set isosamples 500;
            set samples 500;
            splot 1 - x - y + x*y;
        };
        \addlegendimage{blueline, no markers}
        \label{plot:chi1}

        \addplot +[redline, dashed, no markers,
            raw gnuplot,
            name path = Q
            ] gnuplot {
            set contour base;
            set cntrparam levels discrete 0.003;
            unset surface;
            set view map;
            set isosamples 500;
            set samples 500;
            splot 2 + 3*x - 7*y - x**2 + x*y + 2*y**2;
        };
        \addlegendimage{redline, dashed, no markers}
        \label{plot:chi2}

        \addplot +[yellowline, dashdotted, no markers,
            raw gnuplot,
            name path = Q
            ] gnuplot {
            set contour base;
            set cntrparam levels discrete 0.0003;
            unset surface;
            set view map;
            set isosamples 500;
            set samples 500;
            splot 3 - x - 4*y + x*y + y**2;
        };
        \addlegendimage{yellowline, dashdotted, no markers}
        \label{plot:chi3}

        \filldraw[black] (1, 2) circle (2pt) {};
        \filldraw[black] (3, 1) circle (2pt) {};
        \filldraw[black] (1, 1) circle (2pt) {};
    \end{axis}
\end{tikzpicture}
            \tikzexternaldisable
            \caption{Real picture of the secular equations $\chi_1(\eig) = 0$ (\ref{plot:chi1}), $\chi_2(\eig) = 0$ (\ref{plot:chi2}), and $\chi_3(\eig) = 0$ (\ref{plot:chi3}) of the two-parameter eigenvalue problem in \cref{ex:runningexample}. The intersections correspond to the three eigenvalues.}
            \label{fig:runningexample}
        \end{figure}

    \subsection{Backward errors, condition numbers, and pseudospectra}

        Backward errors reveal the stability of a numerical algorithm.
        The problem's condition number measures the sensitivity of the output with respect to perturbations in the input. 
        In the case of solving linear systems, the idea of stability and condition dates back to the work of von Neumann, Goldstine, and Turing, the last one coining the term for the first time.
        Wilkinson, later on, systematized and popularized condition numbers as part of his broader work on rounding error analysis and numerical stability.
        For an algorithm computing the output in finite precision arithmetic the importance of the condition number, $\cond{\cdot}$, stems from the ``rule of thumb'' that the forward error is smaller than $\cond{\cdot}$ times the backward error~\cite{higham2002accuracy}. 
        Problem independent definitions of stability and conditioning, from which we start in the paper, can be found in \cref{app:perturbationtheory}.

        The theory of backward errors and condition numbers is well developed for one-parameter eigenvalue problems, with various results for both generalized~\cite{stewart1990matrix,higham1998structured,fraysse1998note} and polynomial eigenvalue problems~\cite{dedieu2003perturbation,tisseur2000backward}.
        For the more difficult case of singular matrix polynomials, more recent results exist, with the introduction of so-called weak condition numbers~\cite{lotz2020wilkinson}.
        For these problems, classical perturbation theory fails to predict the observed accuracy of computed solutions and weak condition numbers can come to the rescue.
        Hochstenbach and Plestenjak~\cite{hochstenbach2003backward} provided backward error expressions and condition numbers for coupled square multiparameter eigenvalue problems, which consist of multiple multivariate matrix polynomials.

        The pseudospectrum is another tool to study the sensitivity of eigenvalues to perturbations.
        It provides an analytical and graphical alternative to investigate eigenvalue problems.
        The concept has been independently invented in different fields, with some early definitions going back to Landau in the 1970s.
        Pseudospectra have been studied for all one-parameter eigenvalue problems~\cite{tisseur2001structured,trefethen1999computation,trefethen2005spectra} and the coupled square multiparameter eigenvalue problem~\cite{hochstenbach2003backward}.
        Wright and Trefethen considered pseudospectra of rectangular univariate matrix polynomials in~\cite{wright2002pseudospectra}, from which we take some inspiration on how to deal with rectangular matrices in our work.


    \subsection{Outline and contributions}

        We want to quantify the difficulty of computing eigenvalues and the quality of the obtained solutions, answering the questions raised in~\cref{ex:conditionandstability}.
        We restrict our investigation to isolated, finite, and simple solutions, and, although most results hold for polynomial problems, we focus explicitly on the linear rMEP, mostly for notational clarity.

        We begin in \cref{sec:error} with a norm-wise backward error analysis, \emph{providing closed-form expressions to assess the quality of a computed solution}.
        In \cref{sec:condition}, we shift to the forward sensitivity of the problem and \emph{derive condition numbers for both eigenvalues and eigenvectors}.
        \Cref{sec:pseudospectrum} considers the \emph{theoretical and computational analysis of the pseudospectrum} of an rMEP.
        Instead of adding a separate section, we provide numerical examples throughout the different sections to illustrate our developments.
        In~\cref{sec:application}, we bring all three concepts together on a system identification example---an \emph{incidental finding there suggests that, in an optimization-driven system identification problem, globally optimal solutions tend to coincide with the best-conditioned eigenvalues}.
        We conclude in \cref{sec:conclusion} with final remarks and open research questions.

        The work is supplemented with \matlab implementations\footnote{You can download the \matlab functions (\function{mperr}, \function{mpcond}, and \function{mppseudo}) and examples used throughout the paper from \mywebsite.} to compute the norm-wise backward error, condition number, and pseudospectrum.
        Two appendices provide additional information: \cref{app:leftnullspace} reviews the left null space and left eigenvectors of the rMEP and \cref{app:perturbationtheory} collects abstract definitions from perturbation theory.

\section{Backward error analysis}
    \label{sec:error}

    A $k \times \ell$ coefficient matrix $\mat_i$ can be perturbed by a matrix $\err{\mat_i} \in \Cset^{k \times \ell}$, resulting in the perturbed coefficient matrix $\mat_i + \err{\mat_i}$.
    The matrix $\err{\mat_i}$ is bounded norm-wise by an error matrix $\bound_i \in \Cset^{k \times \ell}$. 
    This means that $\norm{\err{\mat_i}} \leq \epsilon \norm{\bound_i}$. 
    Via a particular choice of $\bound_i$, the perturbations are considered absolute ($\bound_i = \mt{1}/\sqrt{k \ell}$) or relative ($\bound_i = \mat_i$).\footnote{The specific choice of $\bound_i$ for absolute perturbations differs from the traditional diagonal matrix in the square matrix case. By choosing these specific perturbations, we guarantee that the norm is one.} 
    We have thus a second, perturbed rMEP next to the original rMEP in~\eqref{eq:mep}, with perturbed coefficient matrices:
    \begin{equation}
        \label{eq:perturbedmep}
        \per{\eig} \rvec = \sum_{i = 0}^m \eigcomp[i] \left(\mat_i + \err{\mat_i}\right) \rvec = \vc{0}, 
    \end{equation}
    in which setting $\eigcomp[0]= 1$ allows again for a more compact formulation.

    We establish in this section the norm-wise\footnote{It is also possible to consider element-wise perturbations in the analysis. The authors expect that a generalization of the results for the GEP in~\cite{higham1998structured} to the rMEP is possible.} backward error for a finite eigenvalue and its corresponding eigenvector (\cref{sec:eigenpairerror}).
    We also consider the optimal backward error associated with the eigenvalue only (\cref{sec:optimalerror}).\footnote{Numerical algorithms to compute both types of backward errors are implemented in \function{mperr}.}

    \subsection{Eigenpair backward error}
        \label{sec:eigenpairerror}

        We define the backward error for an approximate eigenpair as follows:
        \begin{definition}
            Let $(\comp{\eig}, \comp{\rvec}) \in \Cset^m \times \Cset^\ell$ with $\norm{\comp{\rvec}} = 1$ be an approximate eigenpair of~\eqref{eq:mep}.
            The \emph{norm-wise backward error of an eigenpair} $\backward{\comp{\eig}, \comp{\rvec}}$ is given by 
            \begin{equation}
                \begin{gathered}
                    \backward{\comp{\eig}, \comp{\rvec}} = \min_{\err{\mat_i}} \epsilon, \\
                    \sub \per{\comp{\eig}} \comp{\rvec} = \vc{0} \eqand \norm{\err{\mat_i}} \leq \epsilon \norm{\bound_i}, \quad i = 0, 1, \ldots, m.
                \end{gathered}
            \end{equation}
            The choice of error matrices $\mt{E}_i$ determines whether the norm-wise backward error is absolute or relative.
        \end{definition}

        The following theorem, which is a generalization of~\cite{fraysse1998note,tisseur2000backward} for $m > 1$, gives a computable expression for $\backward{\comp{\eig}, \comp{\rvec}}$.
        \begin{theorem}
            \label{thm:backwarderror}
            Consider an rMEP $\mep{\eig} \rvec = \vc{0}$ and an approximate eigenpair $(\comp{\eig}, \comp{\rvec}) \in \Cset^m \times \Cset^\ell$.
            The norm-wise backward eigenpair error can be computed as
            \begin{equation}
                \label{eq:computedbackwarderror}
                \backward{\comp{\eig}, \comp{\rvec}} = \frac{\norm{\comp{\res}}}{\comp{\gamma} \norm{\comp{\rvec}}},
            \end{equation}
            where $\comp{\res} = \mep{\comp{\eig}} \comp{\rvec}$ is the residual at $(\comp{\eig}, \comp{\rvec})$ and $\comp{\gamma} = \norm{\bound_0} + \sum_{i = 1}^m \abs{\comp{\eigcomp[i]}} \norm{\bound_i}$.
        \end{theorem}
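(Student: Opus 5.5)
The proof is the standard two-sided argument of \cite{fraysse1998note,tisseur2000backward}: a lower bound valid for every feasible perturbation, followed by an explicit perturbation that attains it. The rectangular shape and $m>1$ change nothing essential, because the constraint $\per{\comp{\eig}}\comp{\rvec}=\vc{0}$ is a single vector equation in $\Cset^k$. Write $\comp{\eigcomp[0]}=1$ throughout.

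\emph{Lower bound.} Take any feasible perturbation, with $\norm{\err{\mat_i}}\leq\epsilon\norm{\bound_i}$ and
\[
\mep{\comp{\eig}}\comp{\rvec}+\sum_{i=0}^m \comp{\eigcomp[i]}\err{\mat_i}\comp{\rvec}=\vc{0}.
\]
Then $\comp{\res}=-\sum_{i}\comp{\eigcomp[i]}\err{\mat_i}\comp{\rvec}$. The triangle inequality and submultiplicativity of the spectral norm give
\[
\norm{\comp{\res}}\leq\sum_{i=0}^m\abs{\comp{\eigcomp[i]}}\,\norm{\err{\mat_i}}\,\norm{\comp{\rvec}}\leq\epsilon\,\comp{\gamma}\,\norm{\comp{\rvec}}.
\]
Hence every feasible $\epsilon$ satisfies $\epsilon\geq\norm{\comp{\res}}/(\comp{\gamma}\norm{\comp{\rvec}})$.

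\emph{Attainment.} Set $\epsilon^\ast=\norm{\comp{\res}}/(\comp{\gamma}\norm{\comp{\rvec}})$. If $\comp{\res}=\vc{0}$, take all perturbations zero. Otherwise define the rank-one perturbations
\[
\err{\mat_i}=-\frac{\overline{\mathrm{sign}(\comp{\eigcomp[i]})}\,\norm{\bound_i}}{\comp{\gamma}}\,\frac{\comp{\res}\,\comp{\rvec}^\herm}{\norm{\comp{\rvec}}^2}\in\Cset^{k\times\ell},
\]
where $\mathrm{sign}(z)=z/\abs{z}$ for $z\neq0$ and $\mathrm{sign}(0)=0$ (or any unit scalar). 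Each $\err{\mat_i}$ has the required $k\times\ell$ shape.
\begin{itemize}
\item \emph{Norm constraint.} The spectral norm of the rank-one matrix $\comp{\res}\comp{\rvec}^\herm$ is $\norm{\comp{\res}}\norm{\comp{\rvec}}$. Therefore $\norm{\err{\mat_i}}\leq\norm{\bound_i}\norm{\comp{\res}}/(\comp{\gamma}\norm{\comp{\rvec}})=\epsilon^\ast\norm{\bound_i}$.
\item \emph{Equation.} Since $\comp{\eigcomp[i]}\,\overline{\mathrm{sign}(\comp{\eigcomp[i]})}=\abs{\comp{\eigcomp[i]}}$ and $\comp{\rvec}^\herm\comp{\rvec}=\norm{\comp{\rvec}}^2$,
\[
\sum_i\comp{\eigcomp[i]}\err{\mat_i}\comp{\rvec}=-\frac{\sum_i\abs{\comp{\eigcomp[i]}}\norm{\bound_i}}{\comp{\gamma}}\,\comp{\res}=-\comp{\res}.
\]
So $\per{\comp{\eig}}\comp{\rvec}=\vc{0}$.
\end{itemize}
This perturbation is feasible with $\epsilon=\epsilon^\ast$, which shows the minimum is attained and equals \eqref{eq:computedbackwarderror}.

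\emph{Remaining points.} The only delicate step is the attaining perturbation, specifically aligning the phases of the $\comp{\eigcomp[i]}$ so that the contributions add coherently. The phase choice above handles this, including $\comp{\eigcomp[i]}=0$, where that term drops out of both $\comp{\gamma}$ and the sum. One should also note that $\comp{\gamma}>0$, since $\norm{\bound_0}>0$ for both the absolute and relative choices, assuming $\mat_0\neq 0$ in the relative case. Finally, the normalisation $\norm{\comp{\rvec}}=1$ in the definition is harmless, because the formula is scale-invariant in $\comp{\rvec}$.
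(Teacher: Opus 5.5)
Your proof is correct and follows the paper's argument: the same triangle-inequality lower bound, then attainment through rank-one perturbations $\err{\mat_i} \propto \comp{\res}\vc{w}^\herm$ with the same phase alignment. Your $\overline{\mathrm{sign}(z)}$ is the paper's $\sign(z)=\bar{z}/\abs{z}$, and you make the paper's dual vector explicit as $\vc{w}=\comp{\rvec}/\norm{\comp{\rvec}}^2$; your side remarks on $\comp{\res}=\vc{0}$, positivity of $\comp{\gamma}$, and scale invariance in $\comp{\rvec}$ are consistent with the paper's discussion after the theorem.
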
 

        \begin{proof}
            From $\per{\comp{\eig}} \comp{\rvec} = \vc{0}$, it follows that $\comp{\res} = - \err{\mat_0} \comp{\rvec} - \sum_{i = 1}^m \comp{\eigcomp[i]} \err{\mat_i} \comp{\rvec}$. 
            Applying the feasibility constraints $\norm{\err{\mat_i}} \leq \epsilon \norm{\bound_i}$ gives
            \begin{align}
                \norm{\comp{\res}} & = \norm{\err{\mat_0} \comp{\rvec} + \sum_{i = 1}^m \comp{\eigcomp[i]} \err{\mat_i} \comp{\rvec}} \\
                & \leq \left(\norm{\err{\mat_0}} + \sum_{i = 1}^m \abs{\comp{\eigcomp[i]}} \norm{\err{\mat_i}}\right) \norm{\comp{\rvec}} \\
                & \leq \left(\norm{\bound_0} + \sum_{i = 1}^m \abs{\comp{\eigcomp[i]}} \norm{\bound_i}\right) \epsilon \norm{\comp{\rvec}} = \comp{\gamma} \epsilon \norm{\comp{\rvec}}, 
            \end{align}
            which shows that the expression in~\eqref{eq:computedbackwarderror} is a lower bound for $\backward{\comp{\eig}, \comp{\rvec}}$.
            The lower bound can be attained for the perturbations 
            \begin{gather}
                \err{\mat_0} = \frac{- \norm{\bound_0}}{\comp{\gamma}} \comp{\res} \vc{w}^\herm, \\
                \err{\mat_i} = \frac{- \sign\mleft(\comp{\eigcomp[i]}\mright) \norm{\bound_i}}{\comp{\gamma}} \comp{\res} \vc{w}^\herm, 
            \end{gather}
            in which the vector $\vc{w} \in \Cset^\ell$ forms a dual pair with $\comp{\rvec}$, such that $\norm{\comp{\res} \vc{w}^\herm} = \norm{\comp{\res}} \norm{\vc{w}}^\vee = \norm{\comp{\res}} / \norm{\comp{\rvec}}$ (the $l_2$-norm is self-dual).
            For a complex eigenvalue $\eigcomp[i] \in \Cset$, its sign is defined as 
            \begin{equation}
                \sign\mleft(\eigcomp[i]\mright) = 
                \left\lbrace
                \begin{aligned}
                    & \bar{\eigcomp[i]} / \abs{\eigcomp[i]}, & \quad \eigcomp[i] \neq 0, \\
                    & 0, & \quad \eigcomp[i] = 0.
                \end{aligned}
                \right.
            \end{equation}
            The lower bound can be verified by substituting these perturbations into the feasibility constraints.
        \end{proof}

        The norm-wise backward eigenpair error becomes zero when the residual is zero.
        Suppose that the denominator is also zero in that case, we still take $\backward{\eig, \rvec} = 0$ as the eigenpair is an exact one.
        When the residual is not zero, a zero denominator corresponds to forbidding perturbations on the coefficient matrices; the norm-wise backward eigenpair error then beign infinite.

        When $m = 1$, the well-known one-parameter case of the GEP, \cref{thm:backwarderror} reduces to the results in~\cite{fraysse1998note,higham1998structured} with $\backward{\comp{\eig}, \comp{\rvec}} = \norm{\mat_0 + \comp{\lambda} \mat_1} / \left\lbrack(\norm{\err \mat_0} + \abs{\comp{\lambda}} \norm{\err \mat_1}) \norm{\comp{\rvec}}\right\rbrack$.
        If we consider computed right eigenvectors $\comp{\rvec}$ that are normalized, the norm in the denominator of \cref{thm:backwarderror} disappears.
        Then it is very similar to the multiparameter results in~\cite[Theorem~2]{hochstenbach2003backward} when simplifying the expression for a single matrix equation.

    \subsection{Optimal backward error}
        \label{sec:optimalerror} 

        In many situations only the eigenvalues are of interest, and the eigenvectors are not computed during that process.
        The notion of optimal backward error allows judging the numerical qualities of the eigenvalues without knowing the associated eigenvector.
        If we are only interested in the eigenvalues of the rMEP, then a more appropriate measure is the (norm-wise) backward error for a given eigenvalue, given the best possible associated eigenvector. 
        This results in the norm-wise backward eigenvalue error.
        \begin{definition}
            Let $\comp{\eig} \in \Cset^m$ be an approximate eigenvalue of~\eqref{eq:mep}.
            The \emph{norm-wise backward eigenvalue error} is defined as 
            \begin{equation}
                \begin{gathered}
                    \backward{\comp{\eig}} = \min_{\rvec} \backward{\comp{\eig}, \rvec}, \\
                    \sub \norm{\rvec} = 1,
                \end{gathered}
            \end{equation}
            minimizing over all possible eigenvectors.
        \end{definition}

        Notice that the set $\left\lbrace\comp{\eig} : \backward{\comp{\eig}} \leq \epsilon\right\rbrace$ for a certain $\epsilon > 0$ will turn out to be the $\epsilon$-pseudospectrum in \cref{sec:pseudospectrum} (see \cref{thm:equivalences}).
        We can determine the norm-wise backward eigenvalue error via the following theorem.
        \begin{theorem}
            Consider a rectangular matrix polynomial $\mep{\eig} \in \Cset^{k \times \ell}$ and a point $\comp{\eig} \in \Cset^m$.
            The norm-wise backward eigenvalue error can be computed as
            \begin{equation}
                \backward{\comp{\eig}} = \frac{\sigma_\mathrm{min}\mleft(\mep{\comp{\eig}}\mright)}{\comp{\gamma}},
            \end{equation}
            with $\sigma_\mathrm{min}\mleft(\cdot\mright)$ the smallest singular value of a matrix.
        \end{theorem}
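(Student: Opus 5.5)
The plan is to reduce the eigenvalue backward error to the eigenpair backward error of \cref{thm:backwarderror} and then minimize over the eigenvector. By definition, $\backward{\comp{\eig}} = \min_{\norm{\rvec}=1} \backward{\comp{\eig}, \rvec}$. For every unit vector $\rvec$, \cref{thm:backwarderror} gives the closed form
\begin{equation*}
    \backward{\comp{\eig}, \rvec} = \frac{\norm{\mep{\comp{\eig}} \rvec}}{\comp{\gamma}}.
\end{equation*}
The key observation is that $\comp{\gamma} = \norm{\bound_0} + \sum_{i=1}^m \abs{\comp{\eigcomp[i]}} \norm{\bound_i}$ depends only on $\comp{\eig}$ and not on $\rvec$. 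It can therefore be pulled out of the minimization, which leaves $\backward{\comp{\eig}} = \comp{\gamma}^{-1} \min_{\norm{\rvec}=1} \norm{\mep{\comp{\eig}} \rvec}$.

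The second step identifies this minimum with $\sigma_\mathrm{min}\mleft(\mep{\comp{\eig}}\mright)$. Since $k = \ell + m - 1 \geq \ell$, the matrix $\mep{\comp{\eig}} \in \Cset^{k \times \ell}$ is tall, so it has $\ell$ singular values. Write a thin SVD $\mep{\comp{\eig}} = \mt{U}\mt{\Sigma}\mt{V}^\herm$, where $\mt{U}$ has orthonormal columns and $\mt{V}$ is unitary. Then $\norm{\mep{\comp{\eig}}\rvec} = \norm{\mt{\Sigma}\mt{V}^\herm\rvec}$. The Courant--Fischer (min--max) characterization gives $\min_{\norm{\rvec}=1}\norm{\mep{\comp{\eig}}\rvec} = \sigma_\ell = \sigma_\mathrm{min}$, and the minimum is attained at the right singular vector $\vc{v}_\ell$.

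It is worth checking this point explicitly: tallness is what makes the statement correct. For a wide matrix, the minimum over unit vectors would be zero, not the smallest singular value. Here the right dimension $\ell$ is the smaller one, so the formula holds.

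To complete the argument, I will note that the minimum is attained. Choosing $\rvec = \vc{v}_\ell$ together with the explicit rank-one perturbations from the proof of \cref{thm:backwarderror} yields a perturbed problem $\per{\comp{\eig}}\vc{v}_\ell = \vc{0}$ with $\epsilon = \sigma_\mathrm{min}/\comp{\gamma}$. Conversely, no unit $\rvec$ can achieve a smaller $\epsilon$, because of the lower bound in that same theorem. The degenerate conventions carry over unchanged from the remarks after \cref{thm:backwarderror}: the case $\sigma_\mathrm{min} = 0$ gives backward error $0$, and $\comp{\gamma} = 0$ with nonzero residual gives $\infty$.

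I do not expect a real obstacle here. The only delicate point is justifying that the minimizing perturbations may depend on the choice of $\rvec$. This is legitimate because the definition minimizes jointly over $\rvec$ and the perturbations $\err{\mat_i}$.
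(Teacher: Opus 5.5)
Your proposal is correct and follows the same route as the paper. Like the paper, you minimize the eigenpair formula of \cref{thm:backwarderror} over unit vectors $\rvec$, pull out $\comp{\gamma}$ because it does not depend on $\rvec$, and identify $\min_{\norm{\rvec}=1}\norm{\mep{\comp{\eig}}\rvec}$ with $\sigma_\mathrm{min}(\mep{\comp{\eig}})$ via the SVD; your extra checks (tallness $k \geq \ell$, attainment at the right singular vector, and the degenerate conventions) just make explicit what the paper's two-line proof leaves implicit.
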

        \begin{proof}
            We consider the minimization of~\eqref{eq:computedbackwarderror}, which is $\min_{\rvec} \backward{\comp{\eig}, \rvec}$, over a normalized vector $\rvec$ and use 
            \begin{equation}
                \min_{\norm{\vc{x}} = 1} \norm{\vc{A} \vc{x}} = \sigma_\mathrm{min}\mleft(\mt{A}\mright),
            \end{equation}
            which is a standard property of the singular value decomposition~\cite{golub2013matrix}.
        \end{proof}

        \begin{example}
            Consider the running example from \cref{ex:runningexample}.
            One of the solutions is $\sol{\eig} = (1, 1)$ with eigenvector $\sol{\rvec} = \begin{bmatrix} -\sqrt{2} / 2 & \sqrt{2} / 2 \end{bmatrix}^\tr$.
            We take an approximation of the eigenvalue $\comp{\eig} = (\num{0.9999}, \num{0.9999})$ and eigenvector $\comp{\rvec} = \begin{bmatrix} -0.7070 & 0.7072 \end{bmatrix}^\tr$.
            The norm of the residual is $\norm{\comp{\res}} = \num{2.9e-4}$. 
            The norm-wise backward eigenpair error is $\backward{\comp{\eig}, \comp{\rvec}} = \num{2.1e-5}$, while the norm-wise backward eigenvalue error is as expected lower, namely $\backward{\comp{\eig}} = \num{1.0e-5}$.
        \end{example}

\section{Two condition numbers}
    \label{sec:condition}


        Throughout the section, we fix a simple eigenpair $(\sol{\eig}, \sol{\rvec})$ of the rMEP and consider the map $f$,
        \begin{equation}
            \label{eq:fullmap}
            f : \left\lbrace \mat_i \right\rbrace_{i = 0}^m \to \left(\sol{\eig}, \sol{\rvec}\right),
        \end{equation}
        as the local branch sending coefficient matrices in a neighborhood of $\left\lbrace\mat_i\right\rbrace_{i = 0}^m$ to this eigenpair.
        The condition number measures the sensitivity to perturbations in $\mat_i$ of the map~\eqref{eq:fullmap} for a semi-simple eigenvalue $\sol{\eig}$ of the rMEP.
        Both absolute ($\bound_i = \mt{1} / \sqrt{k \ell}$) and relative ($\bound_i = \mat_i$) perturbations can be considered.

        We consider two types of condition numbers: considering a projection of the map onto the eigenvalues (\cref{sec:eigenvaluecondition}) and the eigenvectors (\cref{sec:eigenvectorcondition}).\footnote{The function \function{mpcond} implements \cref{thm:eigenvaluecond} to compute the eigenvalue condition number of an rMEP.}

    \subsection{Eigenvalue condition number}
        \label{sec:eigenvaluecondition}

        In order to obtain the eigenvalue condition number, we project the map~\eqref{eq:fullmap} onto the eigenvalue solution, i.e., 
        \begin{equation}
            f_{\pi_{\eig}} : \left\lbrace\mat_i\right\rbrace_{i = 0}^m \to \sol{\eig}.
        \end{equation}


        \begin{definition}
            \label{def:eigenvalueconditionnumber}
            Consider an rMEP $\mep{\eig} \rvec = \vc{0}$ and eigenvalue $\sol{\eig}$.
            A \emph{norm-wise relative eigenvalue condition number} can be defined by 
            \begin{equation}
                \begin{gathered}
                    \cond{\sol{\eig}} = \lim_{\epsilon \to 0} \sup \frac{\norm{\err{\eig}}}{\epsilon \norm{\sol{\eig}}}, \\
                    \sub \per{\sol{\eig} + \err{\eig}} \left(\sol{\rvec} + \err{\rvec}\right) = \vc{0}, \quad \norm{\err{\mat_i}} \leq \epsilon \norm{\bound_i}, \quad i = 0, 1, \ldots, m,
                \end{gathered}
            \end{equation}    
            together with the implicit requirement that $\err{\rvec} \to \vc{0}$ as $\epsilon \to 0$. 
        \end{definition}

        The above-mentioned implicit requirement is included to avoid the unwanted behavior of taking $\sol{\eig} + \err{\eig}$ equal to another eigenvalue than $\sol{\eig}$, which would lead to making it possible to set $\err{\mat_i} = 0$ and obtain $\cond{\sol{\eig}} = \infty$. 

        \begin{lemma}
            \label{lemma:genericity}
            Take $k = \ell + m - 1$.
            We define an auxiliary matrix $\mt{B}$ for an eigenpair $(\sol{\eig}, \sol{\rvec})$ of the rMEP with coefficient matrices $\mat_0, \mat_1, \ldots, \mat_m \in \Cset^{k \times \ell}$:
            \begin{equation}
                \label{eq:auxiliarymatrix}
                \mt{B} = 
                \begin{bmatrix}
                    \lvec_1^\herm \mat_1 \sol{\rvec} & \lvec_1^\herm \mat_2 \sol{\rvec} & \cdots & \lvec_1^\herm \mat_m \sol{\rvec} \\
                    \lvec_2^\herm \mat_1 \sol{\rvec} & \lvec_2^\herm \mat_2 \sol{\rvec} & \cdots & \lvec_2^\herm \mat_m \sol{\rvec} \\
                    \vdots & \vdots & \ddots & \vdots \\
                    \lvec_m^\herm \mat_1 \sol{\rvec} & \lvec_m^\herm \mat_2 \sol{\rvec} & \cdots & \lvec_m^\herm \mat_m \sol{\rvec}
                \end{bmatrix}.
            \end{equation}
            The vectors $\lvec_i \in \Cset^k$, for $i = 1, 2, \ldots, m$, are linearly independent vectors in the left null space of the $k \times \ell$ matrix $\mep{\sol{\eig}}$.
            If the eigenvalue $\sol{\eig}$ is algebraically simple, then $\mt{B}$ is nonsingular for any choice of basis vectors $\lvec_i \in \Cset$, for $i = 1, 2, \ldots, m$, of the left null space.
        \end{lemma}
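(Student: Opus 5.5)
The plan is to prove the contrapositive: if $\mt{B}$ is singular, then $\sol{\eig}$ cannot be algebraically simple. Simplicity is understood in the sense of the secular equations of \cref{def:secularequations}: the point $\sol{\eig}$ is an isolated common root of multiplicity one of the system $\chi_i(\eig)=0$. Equivalently, the variety defined by the maximal minors of $\mep{\eig}$ is reduced at $\sol{\eig}$.

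First I would observe that the statement is independent of the chosen basis. If $\lvec_i' = \sum_j c_{ij}\lvec_j$ with $\mt{C}=[c_{ij}]$ invertible, then the new auxiliary matrix is $\mt{B}'=\bar{\mt{C}}\mt{B}$. It therefore suffices to show nonsingularity for one basis. Since $\sol{\eig}$ is simple, the rank of $\mep{\sol{\eig}}$ is $\sol{\rho}=\ell-1$, with right null space spanned by $\sol{\rvec}$. Hence the left null space has dimension exactly $k-\ell+1=m$ (cf.\ \cref{app:leftnullspace}), and $m$ independent $\lvec_i$ form a basis of it.

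Next comes a local reduction, in the spirit of a Schur complement. I choose unitary matrices $\mt{U}=[\mt{U}_1\ \mt{L}]$ and $\mt{V}=[\mt{V}_1\ \sol{\rvec}]$, where $\mt{L}=[\lvec_1,\ldots,\lvec_m]$ (orthonormalised), $\mt{U}_1^\herm\mep{\sol{\eig}}\mt{V}_1$ is an invertible $(\ell-1)\times(\ell-1)$ block, and all other blocks vanish at $\sol{\eig}$. For $\eig=\sol{\eig}+\err{\eig}$ close to $\sol{\eig}$, the transformed pencil $\mt{U}^\herm\mep{\eig}\mt{V}$ keeps an invertible top-left block, so its rank deficiency is governed by the $m\times 1$ Schur complement
\[
s(\err{\eig}) = \mt{L}^\herm\mep{\eig}\sol{\rvec} - \mt{L}^\herm\mep{\eig}\mt{V}_1\bigl(\mt{U}_1^\herm\mep{\eig}\mt{V}_1\bigr)^{-1}\mt{U}_1^\herm\mep{\eig}\sol{\rvec}.
\]
Locally, $\eig$ is an eigenvalue exactly when $s(\err{\eig})=\vc{0}$. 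Moreover, the ideal generated by the maximal minors of $\mep{\eig}$ coincides, in the local ring at $\sol{\eig}$, with the ideal generated by the $m$ entries of $s$. This holds because maximal minors of a block matrix with invertible corner equal the unit $\det(\mt{U}_1^\herm\mep{\eig}\mt{V}_1)$ times the entries of the Schur complement.

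Then I would linearise. Both $\mt{L}^\herm\mep{\sol{\eig}}=\vc{0}$ and $\mep{\sol{\eig}}\sol{\rvec}=\vc{0}$ hold, so the second term of $s$ is quadratic in $\err{\eig}$. Hence
\[
s(\err{\eig}) = \mt{L}^\herm\Bigl(\sum_{i=1}^m \err{\eigcomp[i]}\mat_i\Bigr)\sol{\rvec} + O(\norm{\err{\eig}}^2) = \mt{B}\,\err{\eig} + O(\norm{\err{\eig}}^2).
\]
So $\mt{B}$ is exactly the Jacobian of the $m$ local defining equations in $m$ unknowns. A zero of a square analytic system has multiplicity one if and only if its Jacobian is nonsingular. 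Thus algebraic simplicity of $\sol{\eig}$ forces $\det\mt{B}\neq 0$, and by the basis-independence shown first, this holds for every basis.

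The main obstacle is the middle step: making rigorous that the multiplicity defined through the $\binom{k}{\ell}$ secular polynomials equals the local intersection multiplicity of the $m$ Schur-complement equations. The approach I would try is to show that the ideal of maximal minors is generated locally by the entries of $s$, using the block factorisation above and the Cauchy--Binet expansion. I would also check that simplicity implies $\operatorname{rank}\mep{\sol{\eig}}=\ell-1$, since a larger rank drop would make all minors vanish to second order.
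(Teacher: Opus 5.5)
Your proof is correct, and it takes a genuinely different route from the paper's.

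\textbf{The paper's route.} The paper chooses $m$ row selections $\mt{P}_i$ whose secular gradients are independent at $\sol{\eig}$. This makes $\sol{\eig}$ an algebraically simple eigenvalue of the coupled square problem $\mt{P}_i^\tr\mep{\eig}\rvec=\vc{0}$. It then invokes Ko\v{s}ir's lemma \cite{kosir1994finite} to obtain nonsingularity of $[\vc{v}_i^\herm\mt{P}_i^\tr\mat_j\sol{\rvec}]_{i,j}$, where the $\vc{v}_i$ are the Kronecker factors of the left eigenvector. Finally it lifts $\lvec_i=\mt{P}_i\vc{v}_i$ into the left null space of $\mep{\sol{\eig}}$.

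\textbf{Your route and what each buys.} You bypass coupled square problems altogether: the Schur complement $s$ supplies $m$ local defining equations whose Jacobian at $\sol{\eig}$ is exactly $\mt{B}$. This gives a self-contained argument with several advantages.
\begin{itemize}
\item It proves points the paper only asserts: that simplicity forces $\mathrm{rank}\,\mep{\sol{\eig}}=\ell-1$, so the $\lvec_i$ really form a basis, and that nonsingularity transfers between bases via $\bar{\mt{C}}\mt{B}$.
\item With your full local ideal equality, it also gives the converse.
\item It yields the general-$m$ version of \cref{prop:intersection}: the secular Jacobian equals $\mt{C}\,\mt{B}$ for some $L\times m$ matrix $\mt{C}$.
\end{itemize}
The paper's route buys brevity and a direct tie to existing coupled-square theory, at the price of leaving implicit why the chosen square subproblem is simple in Ko\v{s}ir's sense.

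\textbf{The step you flagged as the main obstacle is lighter than you fear.} For the contrapositive you only need one inclusion: each $\chi_i$ lies in the local ideal $(s_1,\ldots,s_m)$. This follows from the block-triangular factorisation of $\mt{U}^\herm\mep{\eig}\mt{V}$ around $\mathrm{diag}(\mt{U}_1^\herm\mep{\eig}\mt{V}_1, s)$ together with Cauchy--Binet, which also covers the constant unitary change of rows.
\begin{itemize}
\item Differentiating $\chi_i=\sum_j c_{ij}s_j$ at $\sol{\eig}$, where $s=\vc{0}$, gives Jacobian $=\mt{C}(\sol{\eig})\,\mt{B}$. This is rank-deficient when $\mt{B}$ is singular.
\item Under the multiplicity-one reading, the secular ideal sits inside $(s_1,\ldots,s_m)$. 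That ideal is strictly smaller than the maximal ideal when $\mt{B}$ is singular, so the local quotient has dimension at least two.
\end{itemize}

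One phrasing to tighten: only the maximal minors containing all top $\ell-1$ rows are literally $\det(\mt{U}_1^\herm\mep{\eig}\mt{V}_1)\,s_j$. The remaining minors are combinations of the $s_j$, and the Cauchy--Binet step is exactly what handles them.
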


        A more involved variant of \cref{lemma:genericity} that holds for polynomial rMEPs can be found in \cite{plestenjak2026homotopy}, of which \cref{lemma:genericity} is clearly a corollary.
        Below, we give a sketch of the proof of \cref{lemma:genericity} by restricting the proof in \cite{plestenjak2026homotopy} to linear problems.

        \begin{proof}
            Let $L = \binom{k}{\ell}$.
            An eigenvalue $\sol{\eig}$ is algebraically simple if it is a simple common root of the multivariate secular system with equations $\chi_i(\eig) = 0$, for $i = 1, 2, \ldots, L$ (see \cref{def:secularequations}).
            Consequently, the Jacobian of the multivariate secular system,
            \begin{equation}
                \begin{bmatrix}
                    \frac{\chi_1(\eig)}{\eigcomp[1]} & \frac{\chi_1(\eig)}{\eigcomp[2]} & \cdots & \frac{\chi_1(\eig)}{\eigcomp[m]} \\
                    \frac{\chi_2(\eig)}{\eigcomp[1]} & \frac{\chi_2(\eig)}{\eigcomp[2]} & \cdots & \frac{\chi_2(\eig)}{\eigcomp[m]} \\
                    \vdots & \vdots & \ddots & \vdots \\
                    \frac{\chi_L(\eig)}{\eigcomp[1]} & \frac{\chi_L(\eig)}{\eigcomp[2]} & \cdots & \frac{\chi_L(\eig)}{\eigcomp[m]}
                \end{bmatrix},
            \end{equation}
            has full rank.
            The $L$ secular equations $\chi_i(\eig) = \det\mleft[\mep{\eig}\mright]_{\sigma_i} = 0$ of an rMEP are the determinants of the selection of $\ell$ rows of the matrix polynomial.
            The row selection can be done by premultiplying the matrix polynomial with the matrix $\mt{P}_i \in \Nset^{k \times \ell}$ consisting of unit vectors, i.e., $\left[\mep{\eig}\right]_{\sigma_i} = \mt{P}_i^\tr \mep{\eig}$.
            Choosing $m$ such row selection matrices so that the corresponding rows of the Jacobian matrix are linearly independent results in a coupled square multiparameter eigenvalue problem, 
            \begin{equation}
                \label{eq:smep}
                \mt{P}_i^\tr \mep{\eig} \rvec = \vc{0}, \quad i = 1, 2, \ldots, m
            \end{equation}
            The eigenvalues of the coupled square multiparameter eigenvalue problem can be described as the common roots of a second system of secular equations, i.e., $\varphi_i(\eig) = \det\mleft(\mt{P}_i^\tr \mep{\eig}\mright) = 0$, for $i = 1, 2, \ldots, m$.
            The simple eigenvalue $\sol{\eig}$ that results in a full Jacobian matrix is also a simple eigenvalue of~\eqref{eq:smep} because of the specific choice of row selection matrices.

            Let $\vc{v}_1 \otimes \vc{v}_2 \otimes \cdots \otimes \vc{v}_m$ be the left eigenvector of the coupled square multiparameter eigenvalue problem in~\eqref{eq:smep}. 
            Since $\sol{\eig}$ is an algebraically simple eigenvalue, it follows from \cite[Lemma~3]{kosir1994finite} that the matrix 
            \begin{equation}
                \mt{B}' = 
                \begin{bmatrix}
                    \vc{v}_1^\herm \mt{P}_1^\tr \mat_1 \sol{\rvec} & \vc{v}_1^\herm \mt{P}_1^\tr \mat_2 \sol{\rvec} & \cdots & \vc{v}_1^\herm \mt{P}_1^\tr \mat_m \sol{\rvec} \\
                    \vc{v}_2^\herm \mt{P}_2^\tr \mat_1 \sol{\rvec} & \vc{v}_2^\herm \mt{P}_2^\tr \mat_2 \sol{\rvec} & \cdots & \vc{v}_2^\herm \mt{P}_2^\tr \mat_m \sol{\rvec} \\
                    \vdots & \vdots & \ddots & \vdots \\
                    \vc{v}_m^\herm \mt{P}_m^\tr \mat_1 \sol{\rvec} & \vc{v}_m^\herm \mt{P}_m^\tr \mat_2 \sol{\rvec} & \cdots & \vc{v}_m^\herm \mt{P}_m^\tr \mat_m \sol{\rvec}
                \end{bmatrix}
            \end{equation}
            is nonsingular.
            By introducing vectors $\lvec_i = \mt{P}_i \vc{v}_i$ from the left null space $\kernel\big(\mep{\sol{\eig}}^\herm\big)$, we can prove that the auxiliary matrix $\mt{B}$ in~\eqref{eq:auxiliarymatrix} is nonsingular for this particular choice of basis vectors. 
            The lemma holds for any basis of the left null space.
        \end{proof}

        \begin{theorem}
            \label{thm:eigenvaluecond}
            Consider an rMEP $\mep{\eig} \rvec = \vc{0}$ and eigenvalue $\sol{\eig}$.
            Let $\mt{B}$ be the auxiliary matrix defined in~\eqref{eq:auxiliarymatrix}.
            The relative eigenvalue condition number is given by
            \begin{equation}
                \label{eq:eigenvaluecond}
                \cond{\sol{\eig}} = \frac{\norm{\mt{B}^\inv}}{\norm{\sol{\eig}}} \sol{\gamma},
            \end{equation}
            in which $\sol{\gamma} = \norm{\bound_0} + \sum_{i = 1}^m \abs{\sol{\eigcomp[i]}} \norm{\bound_i}$ is similar to the definition of $\comp{\gamma}$. 
        \end{theorem}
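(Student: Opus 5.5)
We follow the classical first-order perturbation argument for the generalized eigenvalue problem, with the single left eigenvector replaced by the $m$ left null vectors $\lvec_1, \ldots, \lvec_m$ that enter the auxiliary matrix $\mt{B}$ of \cref{lemma:genericity}.

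\emph{Step 1: linearize the perturbed equation.}
Expand the constraint $\per{\sol{\eig} + \err{\eig}} (\sol{\rvec} + \err{\rvec}) = \vc{0}$ and use $\mep{\sol{\eig}} \sol{\rvec} = \vc{0}$. Since $\err{\rvec} \to \vc{0}$, the products $\err{\eigcomp[i]} \err{\rvec}$, $\err{\mat_i} \err{\rvec}$ and $\err{\eigcomp[i]} \err{\mat_i}$ are of second order, which gives
\begin{equation*}
\mep{\sol{\eig}} \err{\rvec} + \sum_{i = 1}^m \err{\eigcomp[i]} \mat_i \sol{\rvec} + \sum_{i = 0}^m \sol{\eigcomp[i]} \err{\mat_i} \sol{\rvec} = o(\epsilon).
\end{equation*}
For this we need $\err{\eig} = O(\epsilon)$. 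The simple eigenvalue is a simple common root of the secular system, so the implicit function theorem, applied to the square subsystem~\eqref{eq:smep} built in the proof of \cref{lemma:genericity}, makes the local branch $f_{\pi_{\eig}}$ analytic. This justifies discarding the higher-order terms.

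\emph{Step 2: eliminate $\err{\rvec}$.}
Premultiply by $\lvec_j^\herm$, $j = 1, \ldots, m$. Each $\lvec_j$ lies in the left null space of $\mep{\sol{\eig}}$, so the term $\lvec_j^\herm \mep{\sol{\eig}} \err{\rvec}$ vanishes. Stacking the $m$ resulting equations gives
\begin{equation*}
\mt{B} \err{\eig} = -\mt{L}^\herm \sum_{i = 0}^m \sol{\eigcomp[i]} \err{\mat_i} \sol{\rvec} + o(\epsilon), \qquad \mt{L} = [\lvec_1 \ \cdots \ \lvec_m].
\end{equation*}
\Cref{lemma:genericity} guarantees that $\mt{B}$ is invertible, so $\err{\eig} = -\mt{B}^{-1} \mt{L}^\herm \sum_i \sol{\eigcomp[i]} \err{\mat_i} \sol{\rvec} + o(\epsilon)$.

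\emph{Step 3: upper bound.}
Normalize $\norm{\sol{\rvec}} = 1$ and choose $\lvec_i$ as an orthonormal basis, so that $\norm{\mt{L}^\herm} = 1$. This normalization is implicit in~\eqref{eq:eigenvaluecond}: $\mt{B}$ depends on the basis, and we would state this choice explicitly. Then
\begin{equation*}
\norm{\err{\eig}} \leq \norm{\mt{B}^{-1}} \, \sol{\gamma} \, \epsilon + o(\epsilon),
\end{equation*}
and dividing by $\epsilon \norm{\sol{\eig}}$ and letting $\epsilon \to 0$ gives ``$\leq$'' in~\eqref{eq:eigenvaluecond}.

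\emph{Step 4: attainment (the main obstacle).}
We need perturbations with $\norm{\err{\mat_i}} \leq \epsilon \norm{\bound_i}$ that produce equality. Mimicking the proof of \cref{thm:backwarderror}, let $\vc{z} \in \Cset^m$ be a unit right singular vector of $\mt{B}^{-1}$ for its largest singular value. Take
\begin{equation*}
\err{\mat_i} = -\epsilon \, \sign\mleft(\sol{\eigcomp[i]}\mright) \norm{\bound_i} \, \mt{L} \vc{z} \sol{\rvec}^\herm,
\end{equation*}
with $\sign$ as in \cref{thm:backwarderror} and $\sign(\eigcomp[0]) = 1$. This is rank one with norm $\epsilon \norm{\bound_i}$, and $\mt{L}^\herm \mt{L} = \mt{I}$ gives $\mt{L}^\herm \sum_i \sol{\eigcomp[i]} \err{\mat_i} \sol{\rvec} = -\epsilon \sol{\gamma} \vc{z}$. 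Hence $\norm{\err{\eig}} = \epsilon \sol{\gamma} \norm{\mt{B}^{-1}} + o(\epsilon)$.

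It remains to check that for these perturbations a perturbed eigenpair with $\err{\rvec} \to \vc{0}$ actually exists. This follows from the analyticity of the branch established in Step 1. The vector $\err{\rvec}$ is then determined consistently by the component of the linearized equation outside the left null space, which is solvable because $\mep{\sol{\eig}}$ has rank $\ell - 1$ with kernel spanned by $\sol{\rvec}$. This existence and consistency argument is the delicate point, and we would handle it through the square reformulation~\eqref{eq:smep}, where the standard multiparameter perturbation result of~\cite{hochstenbach2003backward} applies.
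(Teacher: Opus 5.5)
Your proof is correct in substance and follows the same skeleton as the paper's: first-order expansion, premultiplication by $\mt{L}^\herm$ for an orthonormal basis $\mt{L}$ of the $m$-dimensional left null space of $\mep{\sol{\eig}}$ to kill the $\err{\rvec}$ term, inversion of $\mt{B}$ via \cref{lemma:genericity}, and rank-one extremal perturbations modelled on \cref{thm:backwarderror}. In Step~4 your argument is actually more careful than the paper's. The paper builds the extremal perturbations from the left eigenvector $\sol{\lvec}$, taken to be one of the orthonormal basis vectors. The right-hand side of~\eqref{eq:deltaconstraint} then becomes $\epsilon \sol{\gamma} \vc{e}_j$, so what one really gets is $\norm{\err{\eig}} = \epsilon \sol{\gamma} \norm{\mt{B}^\inv \vc{e}_j}$, a column norm of $\mt{B}^\inv$. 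This equals $\norm{\mt{B}^\inv}$ only if $\vc{e}_j$ is a dominant right singular vector of $\mt{B}^\inv$, and the paper gives no reason why $\sol{\lvec}$ should point in that direction. Your choice $\mt{L} \vc{z}$ is exactly the fix. Equivalently, the paper's argument is rescued by setting $\sol{\lvec} = \mt{L} \vc{z}$ and rotating the basis, $\mt{L} \mapsto \mt{L} \mt{U}$ with $\mt{U}$ unitary and $\mt{U} \vc{e}_1 = \vc{z}$. This turns $\mt{B}$ into $\mt{U}^\herm \mt{B}$ and leaves $\norm{\mt{B}^\inv}$ unchanged. The invariance holds only for unitary changes of basis, whereas \cref{lemma:genericity} allows any basis. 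That is why your explicit normalization of $\mt{L}$ and $\sol{\rvec}$ is needed for~\eqref{eq:eigenvaluecond} to be well defined.

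The one step you should repair is the existence of an exact perturbed eigenpair with $\err{\rvec} \to \vc{0}$ for your extremal perturbation. The paper simply presupposes the local branch $f$, so you are not behind it. However, the route you propose does not deliver existence. The implicit function theorem for the square subsystem~\eqref{eq:smep}, or the result of~\cite{hochstenbach2003backward}, produces a root of $m$ selected secular equations, possibly with different null vectors in the different blocks. It says nothing about a common null vector of the full $k \times \ell$ matrix $\per{\eig}$. Apply the implicit function theorem instead to the bordered system $\per{\eig} \rvec = \vc{0}$, $\sol[\herm]{\rvec} \rvec = 1$, which has $k + 1 = \ell + m$ equations in $\ell + m$ unknowns. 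Its Jacobian at the unperturbed solution,
\[
\begin{bmatrix}
\mep{\sol{\eig}} & \mat_1 \sol{\rvec} & \cdots & \mat_m \sol{\rvec} \\
\sol[\herm]{\rvec} & 0 & \cdots & 0
\end{bmatrix},
\]
is nonsingular. For a kernel vector $(\vc{u}, \vc{\mu})$, premultiplying the first block row by $\mt{L}^\herm$ gives $\mt{B} \vc{\mu} = \vc{0}$, hence $\vc{\mu} = \vc{0}$. Then $\ker \mep{\sol{\eig}} = \spn\{\sol{\rvec}\}$ together with the border row forces $\vc{u} = \vc{0}$. This yields existence, uniqueness and analyticity of the branch at once. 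For the upper bound you need no implicit function theorem at all. Premultiplying the exact perturbed equation by $\mt{L}^\herm$ annihilates $\mep{\sol{\eig}} \err{\rvec}$ exactly and leaves $(\mt{B} + o(1)) \err{\eig} = O(\epsilon)$, using only $\err{\rvec} \to \vc{0}$.
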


        \begin{proof}
            From \cref{def:eigenvalueconditionnumber}, it follows that $\norm{\err{\eig}}/\norm{\sol{\eig}} \leq \cond{\sol{\eig}} \epsilon + \complexity{\epsilon^2}$.
            Let the coefficient matrices $\mat_i$ be submitted to perturbations of amplitude $\epsilon$.
            If we expand the first constraint and only keep the first-order terms, then we obtain
            \begin{equation}
                \mat_0 \err{\rvec} + \err{\mat_0} \sol{\rvec} + \sum_{i = 1}^m \left(\err{\eigcomp[i]} \mat_i \sol{\rvec} + \sol{\eigcomp[i]} \err{\mat_i} \sol{\rvec} + \sol{\eigcomp[i]} \mat_i \err{\rvec}\right) = \complexity{\epsilon^2}.
            \end{equation}
            In order to extract $\err{\eig}$, we premultiply this equation with $\mt{Y}^\herm$, which is an orthonormal basis matrix for the left null space of $\mep{\sol{\eig}}$. 
            Because $\mep{\sol{\eig}}$ is an $\left(\ell + m - 1\right) \times \ell$ matrix evaluated in a simple eigenvalue $\sol{\eig}$, its left null space has dimension equal to $m$ and $\mt{Y}$ consists of $m$ linearly independent rows $\lvec_i^\herm$. 
            Note that the non-trivial left eigenvector $\sol{\lvec}$ is an element of this vector space.
            The premultiplication results in 
            \begin{align}
                \mt{Y}^\herm \sum_{i = 1}^m \err{\eigcomp[i]} \mat_i \sol{\rvec} & = -\mt{Y}^\herm \err{\mat_0} \sol{\rvec} - \mt{Y}^\herm \sum_{i = 1}^m \sol{\eigcomp[i]} \err{\mat_i} \sol{\rvec} \\
                & \Downarrow \\
                \label{eq:deltaconstraint}
                \mt{B} 
                \begin{bmatrix}
                    \err{\eigcomp[1]} \\
                    \err{\eigcomp[2]} \\
                    \vdots \\
                    \err{\eigcomp[m]}
                \end{bmatrix}
                & = -
                \begin{bmatrix}
                    \lvec_1^\herm \left(\err{\mat_0} \sol{\rvec} + \sum_{i = 1}^m \sol{\eigcomp[i]} \err{\mat_i} \sol{\rvec}\right) \\
                    \lvec_2^\herm \left(\err{\mat_0} \sol{\rvec} + \sum_{i = 1}^m \sol{\eigcomp[i]} \err{\mat_i} \sol{\rvec}\right) \\
                    \vdots \\
                    \lvec_m^\herm \left(\err{\mat_0} \sol{\rvec} + \sum_{i = 1}^m \sol{\eigcomp[i]} \err{\mat_i}  \sol{\rvec}\right)
                \end{bmatrix}.
            \end{align}
            Because of \cref{lemma:genericity}, the auxiliary matrix $\mt{B}$ is invertible and $\norm{\err{\eig}} \leq \norm{\mt{B}^\inv} \sol{\gamma} \epsilon$.
            Consequently, \eqref{eq:eigenvaluecond} is an upper bound for $\cond{\sol{\eig}}$. 
            This upper bound can be attained for the perturbations
            \begin{gather}
                \err{\mat_0} = -\epsilon \norm{\bound_0} \sol{\lvec} \sol[\herm]{\rvec}, \\
                \err{\mat_i} = -\epsilon \sign\mleft(\sol{\eigcomp[i]}\mright) \norm{\bound_i} \sol{\lvec} \sol[\herm]{\rvec},
            \end{gather}
            which can be verified by substituting these perturbations into the feasibility conditions.
            It is straightforward to check that $\norm{\err{\mat_i}} = \epsilon \norm{\bound_i}$ for $i = 0, 1, \ldots, m$ (using the property that $\norm{\sol{\lvec} \sol[\herm]{\rvec}} = 1$).
            The remaining constraint, $\per{\sol{\eig} + \err \eig} \left( \sol{\rvec} + \err \rvec\right) = \vc{0}$, can be verified by substituting the perturbations into~\eqref{eq:deltaconstraint}, which equals
            \begin{equation}
                \mt{B} \err{\eig} = -
                \begin{bmatrix}
                    \lvec_1^\herm (-\epsilon \norm{\bound_0} \sol{\lvec} \sol[\herm]{\rvec} \rvec - \sum_{i = 1}^m  \eigcomp[i] \epsilon \sign (\sol{\eigcomp[i]}) \norm{\bound_i} \sol{\lvec} \sol[\herm]{\rvec} \rvec ) \\
                    \lvec_2^\herm (-\epsilon \norm{\bound_0} \sol{\lvec} \sol[\herm]{\rvec} \rvec - \sum_{i = 1}^m  \eigcomp[i] \epsilon \sign (\sol{\eigcomp[i]}) \norm{\bound_i} \sol{\lvec} \sol[\herm]{\rvec} \rvec ) \\
                    \vdots \\
                    \lvec_m^\herm (-\epsilon \norm{\bound_0} \sol{\lvec} \sol[\herm]{\rvec} \rvec - \sum_{i = 1}^m \eigcomp[i] \epsilon \sign (\sol{\eigcomp[i]}) \norm{\bound_i} \sol{\lvec} \sol[\herm]{\rvec} \rvec ) 
                \end{bmatrix}
                = \epsilon \sol{\gamma}
                \begin{bmatrix}
                    \lvec_1^\herm \sol{\lvec} \\
                    \lvec_2^\herm \sol{\lvec} \\
                    \vdots \\
                    \lvec_m^\herm \sol{\lvec}
                \end{bmatrix}.
            \end{equation}
            Since the left eigenvector $\sol{\lvec}$ is part of the left null space and orthogonal to the other basis vectors in $\mt{Y}$, the right factor corresponds to a unit vector $\vc{e}_j$.
            Left-multiplying by $\mt{B}^\inv$, taking the norm, and dividing by $\epsilon \norm{\sol{\eig}}$ gives~\eqref{eq:eigenvaluecond}.
        \end{proof}

        If we would consider the GEP, given by $\left(\mat_0 + \lambda \mat_1\right) \rvec = \vc{0}$, then the auxiliary matrix $\mt{B}$ from \cref{lemma:genericity} corresponds to the scalar $\sol[\herm]{\lvec} \mat_1 \sol{\rvec}$ and $\norm{\mt{B}^\inv} = 1 / (\sol[\herm]{\lvec} \mat_1 \sol{\rvec})$.
        The resulting condition number is then equal to 
        \begin{equation}
            \cond{\sol{\eig}} = \sol{\gamma} / (\sol[\herm]{\lvec} \mat_1 \sol{\rvec} \sol{\lambda}),                 
        \end{equation}
        which the same as in~\cite{fraysse1998note,higham1998structured} when considering normalized left and right eigenvectors.

        The absolute norm-wise eigenvalue condition number can be show to correspond to~\eqref{eq:eigenvaluecond} without $\norm{\sol{\eig}}$ in the denominator. 
        This looks exactly like a single equation equivalent of the absolute condition number derived in~\cite{hochstenbach2003backward} for coupled square multiparameter eigenvalue problem.

        \begin{remark}
            Notice that the condition number in~\eqref{eq:eigenvaluecond} is not valid for an eigenvalue with $\norm{\sol{\eig}} = 0$. 
            In such a case, one can measure the perturbation on the eigenvalue in an absolute way, by removing the norm from the denominator in~\eqref{eq:eigenvaluecond}.
        \end{remark}


        \begin{example}
            The condition numbers for the eigenvalues in \cref{ex:runningexample} are more or less the same: $\cond{\sol{\eig}_{(1)}} = \num{9.1899}$, $\cond{\sol{\eig}_{(2)}} = \num{6.9633}$, and $\cond{\sol{\eig}_{(3)}} = \num{11.2077}$.
        \end{example}

        \begin{remark}
            The condition number in~\eqref{eq:eigenvaluecond} is defined at the true eigenvalue $\sol{\eig}$, but in practice one has access only to a computed approximation $\comp{\eig}$.
            Each ingredient of~\eqref{eq:eigenvaluecond}, however, depends continuously on the evaluated simple eigenvalue.
            Consequently, $\cond{\comp{\eig}} \to \cond{\sol{\eig}}$ as $\comp{\eig} \to \sol{\eig}$, and thus $\cond{\comp{\eig}}$ is well-defined and accurate whenever the norm-wise backward eigenvalue error $\backward{\comp{\eig}}$ is small.
        \end{remark}


        \begin{example}
            \label{ex:otherexample}
            Another two-parameter eigenvalue problem is given by the following coefficient matrices:
            \begin{equation}
                \mat_0 = 
                \begin{bmatrix}
                    2 & 6 \\
                    4 & 5 \\
                    0 & 1 
                \end{bmatrix}, 
                \mat_1 = 
                \begin{bmatrix}
                    1 & 0 \\
                    0 & 1 \\
                    1 & 1 
                \end{bmatrix}, \eqand
                \mat_2 = 
                \begin{bmatrix}
                    4 & 2 \\
                    0 & 8 \\
                    1 & 1 
                \end{bmatrix}.
            \end{equation}
            Its three computed eigenvalue solutions, $\comp{\eig}_{(1)}$, $\comp{\eig}_{(2)}$, and $\comp{\eig}_{(3)}$, are given in \cref{tab:otherexample}, together with the norm-wise backward eigenvalue error, norm-wise relative eigenvalue condition numbers and angles of intersections of the secular equations.
            As further explained in \cref{prop:intersection}, a more tangent intersection of the secular equations leads to a higher condition number.
        \end{example}

        \begin{table}[t]
            \centering
            \caption{Numerical values of \cref{ex:otherexample}, motivating the results of \cref{prop:intersection}. Next to the computed eigenvalues $\comp{\eig} = (\comp{\eigcomp[1]}, \comp{\eigcomp[2]})$ with their norm-wise backward eigenvalue error $\backward{\comp{\eig}}$, we also give the norm-wise relative eigenvalue condition numbers $\cond{\comp{\eig}}$, the norm of the inverted auxiliary matrix $\mt{B}$ (which is essential in \cref{thm:eigenvaluecond}) and the mean intersection angle (in radians) of the secular equations at the computed eigenvalues. A higher eigenvalue condition number indicates that curves intersect at the eigenvalues that nearly parallel.}
            \label{tab:otherexample}
            \begin{tabular}{r|cccccc}
                \toprule
                & $\comp{\eigcomp[1]}$ & $\phantom{-}\comp{\eigcomp[2]}$ & $\backward{\comp{\eig}}$ & $\cond{\comp{\eig}}$ & $\norm{\mt{B}^\inv}$ & $\sum_{i = 1}^3 \theta_i / 3$ \\
                \midrule
                $\comp{\eig}_{(1)}$ & $3.6026$ & $-0.4183$ & $\num{7.1e-17}$ & $\num{37.9315}$ & $\num{7.3740}$ & $\qty{0.0977}{\radian}$ \\
                $\comp{\eig}_{(2)}$ & $1.3683$ & $\phantom{-}0.0552$ & $\num{1.7e-16}$ & $\num{62.8704}$ & $\num{7.3355}$ & $\qty{0.2077}{\radian}$ \\
                $\comp{\eig}_{(3)}$ & $0.9338$ & $-1.3750$ & $\num{1.9e-16}$ & $\num{11.9969}$ & $\num{0.9036}$ & $\qty{0.6283}{\radian}$ \\
                \bottomrule
            \end{tabular}
        \end{table}

        \begin{figure}
            \centering
            \tikzexternalenable
            \begin{tikzpicture}[trim axis left, trim axis right]
    \begin{axis}[
        figurestyle,
        xmin= 0,
        xmax= 4,
        ymin= -3,
        ymax= 1,
        xlabel = {$\lambda_1$},
        ylabel = {$\lambda_2$}
        ]

        \addplot +[blueline, no markers,
            raw gnuplot,
            name path = P
            ] gnuplot {
            set contour base;
            set cntrparam levels discrete 0.0001;
            unset surface;
            set view map;
            set isosamples 500;
            set samples 500;
            splot -x**2 - 9*x*y - x - 8*y**2 - y + 4;
        };
        \addlegendimage{blueline, no markers}
        \label{plot:otherexample:chi1}

        \addplot +[redline, dashed, no markers,
            raw gnuplot,
            name path = Q
            ] gnuplot {
            set contour base;
            set cntrparam levels discrete 0.003;
            unset surface;
            set view map;
            set isosamples 500;
            set samples 500;
            splot x**2 + 3*x*y + 2*y**2 + 2 - 3*x;
        };
        \addlegendimage{redline, dashed, no markers}
        \label{plot:otherexample:chi2}

        \addplot +[yellowline, dashdotted, no markers,
            raw gnuplot,
            name path = Q
            ] gnuplot {
            set contour base;
            set cntrparam levels discrete 0.0003;
            unset surface;
            set view map;
            set isosamples 500;
            set samples 500;
            splot x**2 + 12*x*y + 7*x + 32*y**2 + 28*y - 14;
        };
        \addlegendimage{yellowline, dashdotted, no markers}
        \label{plot:otherexample:chi3}

        \filldraw[black] (3.6026, -0.4183) circle (2pt) {};
        \filldraw[black] (1.3683, 0.0552) circle (2pt) {};
        \filldraw[black] (0.9338, -1.3750) circle (2pt) {};
    \end{axis}
\end{tikzpicture} 
            \tikzexternaldisable
            \caption{Real picture of the secular equations $\chi_1(\eig) = 0$ (\ref{plot:otherexample:chi1}), $\chi_2(\eig) = 0$ (\ref{plot:otherexample:chi2}), and $\chi_3(\eig) = 0$ (\ref{plot:otherexample:chi3}) of the two-parameter eigenvalue problem in \cref{ex:otherexample}. The intersections correspond to the three eigenvalues. The numerical values are given in \cref{tab:otherexample}. When the intersections are more tangent, a higher condition number is associated to that eigenvalue.}
            \label{fig:otherexample}
        \end{figure}

        \begin{proposition}
            \label{prop:intersection}
            Let $\sol{\eig} \in \Cset^2$ be an algebraically simple eigenvalue of the two-parameter eigenvalue problem $\mep{\eig}$ with coefficient matrices $\mat_i \in \Cset^{3 \times 2}$. 
            If $\chi_i(\eig)$, for $i = 1, 2, 3$, are the secular equations of $\mep{\eig}$ and $\mt{B}$ is the auxiliary matrix from \cref{lemma:genericity}, then the Jacobian matrix is equal to
            \begin{equation}
                \label{eq:intersection}
                \begin{bmatrix}
                    \dfrac{\partial \chi_1(\sol{\eig})}{\partial \eigcomp[1]} & \dfrac{\partial \chi_1(\sol{\eig})}{\partial \eigcomp[2]} \\
                    \dfrac{\partial \chi_2(\sol{\eig})}{\partial \eigcomp[1]} & \dfrac{\partial \chi_2(\sol{\eig})}{\partial \eigcomp[2]} \\
                    \dfrac{\partial \chi_3(\sol{\eig})}{\partial \eigcomp[1]} & \dfrac{\partial \chi_3(\sol{\eig})}{\partial \eigcomp[2]}
                \end{bmatrix}
                =
                \begin{bmatrix}
                    \alpha & 0 \\
                    0 & \beta \\
                    \gamma & \delta 
                \end{bmatrix}
                \vc{B},
            \end{equation}
            for some scalars $\alpha$, $\beta$, $\gamma$, and $\delta \in \Cset$.
            The Jacobian matrix is thus a fixed linear transformation of the auxiliary matrix $\mt{B}$.
        \end{proposition}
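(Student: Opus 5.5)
The plan is to compute each partial derivative of a secular polynomial with Jacobi's formula for the derivative of a determinant, and then show that at $\sol{\eig}$ this derivative has the form $\lvec^\herm \mat_j \sol{\rvec}$ for some vector $\lvec$ in the left null space of $\mep{\sol{\eig}}$. Because that null space is two-dimensional, all three rows of the Jacobian can then be written in terms of the two rows of $\mt{B}$. The conclusion holds for the particular basis of the left null space constructed below. For any other basis, $\mt{B}$ changes by an invertible $2\times 2$ factor, which is absorbed into the left factor (that factor then need not be diagonal in its upper block).

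First, fix the indexing of the three row selections: $\sigma_1=\{1,2\}$, $\sigma_2=\{1,3\}$, $\sigma_3=\{2,3\}$. Write $\chi_i(\eig)=\det(\mt{P}_i^\tr\mep{\eig})$ as in the proof of \cref{lemma:genericity}. Jacobi's formula gives
\begin{equation*}
\frac{\partial \chi_i(\sol{\eig})}{\partial \eigcomp[j]}=\operatorname{tr}\big(\operatorname{adj}(\mt{P}_i^\tr\mep{\sol{\eig}})\,\mt{P}_i^\tr\mat_j\big).
\end{equation*}
Since $\sol{\eig}$ is an eigenvalue, $\mep{\sol{\eig}}$ has rank at most $1$ by \cref{cor:rankdefinition}, because the normal rank is $2$. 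Hence each $2\times 2$ submatrix $\mt{P}_i^\tr\mep{\sol{\eig}}$ is singular.

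The key step is to describe the adjugate of such a submatrix. If a $2\times2$ matrix $\mt{C}$ has rank exactly one, then $\operatorname{adj}(\mt{C})$ has rank one, and $\mt{C}\operatorname{adj}(\mt{C})=\operatorname{adj}(\mt{C})\mt{C}=\mt{0}$. Therefore $\operatorname{adj}(\mt{C})=\vc{x}\vc{w}^\herm$, with $\vc{x}\in\kernel(\mt{C})$ and $\vc{w}\in\kernel(\mt{C}^\herm)$. Here $\sol{\rvec}\in\kernel(\mt{P}_i^\tr\mep{\sol{\eig}})$, and this kernel is one-dimensional. So we can write $\operatorname{adj}(\mt{P}_i^\tr\mep{\sol{\eig}})=\sol{\rvec}\,\vc{w}_i^\herm$ for some $\vc{w}_i\in\Cset^2$ with $\vc{w}_i^\herm\mt{P}_i^\tr\mep{\sol{\eig}}=\vc{0}$. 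If the submatrix is zero, its adjugate is zero and we simply take $\vc{w}_i=\vc{0}$. Using the trace identity, we get
\begin{equation*}
\frac{\partial\chi_i(\sol{\eig})}{\partial\eigcomp[j]}=\vc{w}_i^\herm\mt{P}_i^\tr\mat_j\sol{\rvec}=\vc{y}_i^\herm\mat_j\sol{\rvec},\qquad \vc{y}_i=\mt{P}_i\vc{w}_i,
\end{equation*}
and each $\vc{y}_i$ lies in $\kernel(\mep{\sol{\eig}}^\herm)$.

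Next, since $\sol{\eig}$ is simple, the left null space has dimension $m=2$. The Jacobian has full rank, so by the formula above the $\vc{y}_i$ span this null space. The vector $\vc{y}_1$ vanishes in row $3$ and $\vc{y}_2$ vanishes in row $2$, so when both are nonzero they are linearly independent. I take $\lvec_1=\vc{y}_1$ and $\lvec_2=\vc{y}_2$ as the basis in \eqref{eq:auxiliarymatrix}, rescaled if desired. The rescaling factors, or the coefficients relative to a chosen normalization, are $\alpha$ and $\beta$. The third vector $\vc{y}_3$ lies in the same two-dimensional space, so $\vc{y}_3=\bar\gamma\lvec_1+\bar\delta\lvec_2$. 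Stacking the three rows then gives \eqref{eq:intersection}.

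The main obstacle I expect is the degenerate case, where some $\vc{y}_i$ vanishes, for example $\vc{y}_1=\vc{0}$. Then $\vc{y}_2$ and $\vc{y}_3$ must span the null space, and a diagonal block with $\alpha=0$ still has the stated form. To cover this I would pick the basis adapted to the two nonzero $\vc{y}_i$ and, if needed, relabel rows; with this choice the zero row of the Jacobian corresponds to a zero diagonal entry. I would also check the conjugation convention, $\vc{w}^\herm$ versus $\vc{w}^\tr$, coming from the adjugate factorization, so that the scalars come out consistently in $\Cset$.
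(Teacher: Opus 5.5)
Your route is the paper's own: Jacobi's formula, the rank-one adjugate $\mathrm{adj}(\mt{P}_i^\tr\mep{\sol{\eig}})=\sol{\rvec}\vc{w}_i^\herm$, lifting $\vc{y}_i=\mt{P}_i\vc{w}_i$ into the two-dimensional left null space, and writing the third row through the first two.

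The step that fails is your claim that $\vc{y}_1$ and $\vc{y}_2$ are linearly independent whenever both are nonzero. Their supports $\{1,2\}$ and $\{1,3\}$ overlap in row~1. If row~1 of $\mep{\sol{\eig}}$ is zero while rows~2 and~3 are not, both vectors are nonzero multiples of $\vc{e}_1$. This is compatible with every hypothesis. Take
\[
\mep{\eig}=\begin{bmatrix} 0 & \eigcomp[1] \\ 1 & \eigcomp[2] \\ 1 & 0 \end{bmatrix}, \qquad \sol{\eig}=(0,0).
\]
Your indexing gives $\chi_1=\chi_2=-\eigcomp[1]$ and $\chi_3=-\eigcomp[2]$. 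The Jacobian has rows $(-1,0)$, $(-1,0)$, $(0,-1)$, so it has full rank and the eigenvalue is simple. Yet $\vc{y}_1=\vc{y}_2=-\vc{e}_1$. With this ordering the conclusion is false for every basis of the left null space. Two equal nonzero rows force $\alpha,\beta\neq 0$ and make both rows of $\mt{B}$ parallel, which contradicts the nonsingularity of $\mt{B}$ in \cref{lemma:genericity}. Your degenerate-case paragraph only treats $\vc{y}_i=\vc{0}$, so it does not cover this situation.

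The repair is already in your spanning argument. The three $\vc{y}_i$ span the two-dimensional left null space, so some pair of them is linearly independent. Number the secular equations so that this pair belongs to $\chi_1,\chi_2$. Equivalently, order the rows of $\mep{\eig}$ so that the row shared by $\sigma_1$ and $\sigma_2$ is a nonzero row of $\mep{\sol{\eig}}$; such a row exists because the rank is one. Then set $\lvec_1=\vc{y}_1$, $\lvec_2=\vc{y}_2$, and the rest of your argument goes through unchanged. A vanishing $\vc{y}_i$ is handled automatically: it lands in the third position with $\gamma=\delta=0$.

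The statement should therefore be read as holding for a suitable ordering of the $\chi_i$. The paper's proof makes the same tacit assumption when it sets $\vc{w}_1=\lvec_1$ and $\vc{w}_2=\lvec_2$. You should state the ordering choice explicitly rather than rely on the support argument.
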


        \begin{proof}
            Consider the univariate square matrix pencil $\mcl{C}(t) = \left[\mep{t, \sol{\eigcomp[2]}}\right]_{\sigma_1}$, which corresponds to the first and second row of the matrix polynomial $\mep{\eig}$ with the second spectral parameter fixed at the solution $\sol{\eig}$.
            The next equality is a property of determinants,
            \begin{equation}
                \frac{\dd \mleft(\det \mcl{C}(t)\mright)}{\dd t} = \mathrm{trace}\mleft(\mathrm{adj} \, \mcl{C}(t) \frac{\dd \mleft(\mcl{C}(t)\mright)}{\dd t}\mright),
            \end{equation}
            from which it follows that the derivative of the first secular equation is equal to 
            \begin{equation}
                \frac{\dd \mleft(\chi_1(t, \sol{\eigcomp[2]})\mright)}{\dd t} = \mathrm{trace}\mleft(\mathrm{adj} \, \mcl{C}(t) \mt{P}_1 \mat_1\mright) = \mathrm{trace}\mleft(\alpha(t) \vc{x}(t) \vc{w}(t)^\herm \mt{P}_1 \mat_1\mright).
            \end{equation}
            The first equality follows from taking the derivative of $\mcl{C}(t) = \mt{P}_1 \mep{t, \sol{\eigcomp[2]}}$ with respect to $t$, where $\mt{P}_1$ performs the row selection, while the second equality uses the fact that the adjugate matrix of a rank-$(\ell - 1)$ matrix $\mt{M} \in \Cset^{\ell \times \ell}$ has rank one and in that case $\mathrm{adj}(\mt{M}) = \alpha \vc{x} \vc{w}^\herm$, where $\alpha$ is a scalar and $\vc{x}$ and $\vc{w}$ are null space vectors such that $\mt{M} \vc{x} = \vc{0}$ and $\vc{w}^\herm \mt{M} = \vc{0}$~\cite[p.~22]{horn2012matrix}.
            Evaluating $t$ in the first component of the solution $\sol{\eig}$ and reordering the elements within the trace operation results in
            \begin{equation}
                \frac{\partial \chi_1(\sol{\eig})}{\partial \eigcomp[1]} = \alpha \, \mathrm{trace} \mleft(\vc{w}(\eigcomp[1])^\herm \mt{P}_1 \mat_1 \vc{x}(\eigcomp[1])\mright) = \alpha \vc{w}(\sol{\eigcomp[1]})^\herm \mt{P}_1 \mat_1 \vc{x}(\sol{\eigcomp[1]}).
            \end{equation}
            Since $\mcl{C}(\sol{\eigcomp[1]})$ corresponds to a row selection of the evaluated matrix polynomial the right eigenvector lies in the right null space of that matrix and $\vc{x}(\sol{\eigcomp[1]}) = \sol{\rvec}$.
            Furthermore, we can combine the vector $\vc{w}(\sol{\eigcomp[1]})$ and row selection matrix into $\vc{w}_1 = \mt{P}_1^\herm \vc{w}(\sol{\eigcomp[1]})$, leading to
            \begin{equation}
                \frac{\partial \chi_1(\sol{\eig})}{\partial \eigcomp[1]} = \alpha \vc{w}_1^\herm \mat_1 \sol{\rvec}.
            \end{equation}

            If we repeat this approach for each secular equation (leading to the same scalar value) and each partial derivative, then we obtain
            \begin{equation}
                \begin{bmatrix}
                    \dfrac{\partial \chi_1(\sol{\eig})}{\partial \eigcomp[1]} & \dfrac{\partial \chi_1(\sol{\eig})}{\partial \eigcomp[2]} \\
                    \dfrac{\partial \chi_2(\sol{\eig})}{\partial \eigcomp[1]} & \dfrac{\partial \chi_2(\sol{\eig})}{\partial \eigcomp[2]} \\
                    \dfrac{\partial \chi_3(\sol{\eig})}{\partial \eigcomp[1]} & \dfrac{\partial \chi_3(\sol{\eig})}{\partial \eigcomp[2]}
                \end{bmatrix}
                =
                \begin{bmatrix}
                    \alpha & 0 & 0  \\
                    0 & \beta & 0 \\
                    0 & 0 & \epsilon 
                \end{bmatrix}
                \begin{bmatrix}
                    \vc{w}_1^\herm \vc{A}_1 \sol{\rvec} & \vc{w}_1^\herm \vc{A}_2 \sol{\rvec} \\
                    \vc{w}_2^\herm \vc{A}_1 \sol{\rvec} & \vc{w}_2^\herm \vc{A}_2 \sol{\rvec} \\
                    \vc{w}_3^\herm \vc{A}_1 \sol{\rvec} & \vc{w}_3^\herm \vc{A}_2 \sol{\rvec}
                 \end{bmatrix}.
            \end{equation}
            Since the dimension of the left null space of the matrix polynomial $\mep{\eig}$ is equal to two, there are only two linearly independent basis vectors for that left null space. 
            Thus, we can write $\vc{w}_1 = \lvec_1$, $\vc{w}_2 = \lvec_2$, and $\epsilon \vc{w}_3 = \gamma \lvec_1 + \delta \lvec_2$, leading to~\eqref{eq:intersection}.
        \end{proof}

        According to \cref{prop:intersection}, the intersection angle of the secular equations is proportional to the auxiliary matrix $\mt{B}$ in the eigenvalue condition number~\eqref{eq:eigenvaluecond}. 
        A higher eigenvalue condition number indicates that curves intersecting at the eigenvalue are nearly parallel, which can be observed in \cref{fig:otherexample}.

    \subsection{Eigenvector condition number}
        \label{sec:eigenvectorcondition}

        For completeness, we also consider the eigenvector condition number, for which we use a different projection of~\eqref{eq:fullmap} onto the eigenvector solutions, i.e., 
        \begin{equation}
            f_{\pi_{\rvec}} : \left\lbrace\mat_i\right\rbrace_{i = 0}^m \to \sol{\rvec}.
        \end{equation}
        Because an eigenvector corresponding to a simple eigenvalue is unique only up to scalar multiplication, it is important to normalize the eigenvectors in the following definition~\cite{higham1998structured}. 

        \begin{definition}
            Consider an rMEP $\mep{\eig} \rvec = \vc{0}$ and eigenpair $(\sol{\rvec}, \sol{\rvec})$.
            A \emph{norm-wise relative eigenvector condition number} can be defined by 
            \begin{equation}
                \begin{gathered}
                    \cond{\sol{\rvec}} = \lim_{\epsilon \to 0} \sup \frac{\norm{\err{\rvec}}}{\epsilon \norm{\sol{\rvec}}} \\
                    \begin{aligned}
                        \sub & \per{\sol{\eig} + \err{\eig}} \left(\sol{\rvec} + \err{\rvec}\right) = \vc{0}, \\
                        & \norm{\err{\mat_i}} \leq \epsilon \norm{\bound_i}, \quad i = 0, 1, \ldots, m, \\
                        & \vc{g}^\herm \sol{\rvec} = \vc{g}^\herm \left(\sol{\rvec} + \err{\rvec}\right) = 1.
                    \end{aligned}
                \end{gathered}
            \end{equation}    
            The linear normalization in the last constraint uses a constant vector $\vc{g} \in \Cset^\ell$.
        \end{definition}

        \begin{theorem}
            Consider an rMEP $\mep{\eig} \rvec = \vc{0}$ and eigenpair $(\sol{\eig}, \sol{\rvec})$.
            The norm-wise relative eigenvector condition number is given by 
            \begin{equation}
                \label{eq:eigenvectorcond}
                \cond{\sol{\rvec}} = \norm{\mt{V} \left(\mt{W}^\herm \mep{\sol{\eig}} \mt{V}\right)^\inv \mt{W}^\herm} \sol{\gamma},
            \end{equation}
            where $\sol{\gamma} = \norm{\bound_0} + \sum_{i = 1}^m \abs{\sol{\eigcomp[i]}} \norm{\bound_i}$ as before. 
            The two matrices $\mt{V} \in \Cset^{\ell \times (\ell - 1)}$ and $\mt{W} \in \Cset^{k \times (\ell - 1)}$ are specifically chosen full column rank matrices so that $\vc{g}^\herm \mt{V} = \vc{0}$ and $\mt{W}^\herm \mat_i \sol{\rvec} = \vc{0}$, for $i = 1, 2, \ldots, m$.
        \end{theorem}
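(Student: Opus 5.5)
We follow the template of the proof of \cref{thm:eigenvaluecond}: linearize the perturbed equation, remove the unknown eigenvalue perturbation, solve for $\err{\rvec}$, and then show the resulting upper bound is attained. Write $\mt{M} = \mep{\sol{\eig}}$ and let $\err{\mat_i}$ have size $\epsilon$. Expanding $\per{\sol{\eig} + \err{\eig}}(\sol{\rvec} + \err{\rvec}) = \vc{0}$ to first order gives
\begin{equation*}
\mt{M} \err{\rvec} + \sum_{i = 1}^m \err{\eigcomp[i]} \mat_i \sol{\rvec} + \sum_{i = 0}^m \sol{\eigcomp[i]} \err{\mat_i} \sol{\rvec} = \complexity{\epsilon^2}.
\end{equation*}

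\emph{Elimination step.} Premultiply by $\mt{W}^\herm$. Since $\mt{W}^\herm \mat_i \sol{\rvec} = \vc{0}$ for $i \geq 1$, the $\err{\eig}$ terms vanish, leaving
\begin{equation*}
\mt{W}^\herm \mt{M} \err{\rvec} = -\mt{W}^\herm \vc{s} + \complexity{\epsilon^2}, \qquad \vc{s} = \sum_{i = 0}^m \sol{\eigcomp[i]} \err{\mat_i} \sol{\rvec}.
\end{equation*}
This is the multiparameter analogue of projecting away the $\mat_1 \sol{\rvec}$ direction in the GEP. The normalization $\vc{g}^\herm \err{\rvec} = 0$ forces $\err{\rvec} = \mt{V} \vc{z}$ for some $\vc{z} \in \Cset^{\ell - 1}$, because $\mt{V}$ spans $\vc{g}^\perp$, which has dimension $\ell - 1$. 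Substituting gives the square system $(\mt{W}^\herm \mt{M} \mt{V}) \vc{z} = -\mt{W}^\herm \vc{s}$. Hence, to first order,
\begin{equation*}
\err{\rvec} = -\mt{V} (\mt{W}^\herm \mt{M} \mt{V})^\inv \mt{W}^\herm \vc{s}.
\end{equation*}

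\emph{Invertibility (main obstacle).} We must show that $\mt{W}^\herm \mt{M} \mt{V}$ is nonsingular, and that suitable $\mt{W}$ exists with $\ell - 1$ columns. The columns of $\mt{W}$ must span a subspace of the orthogonal complement of $\mathrm{span}\{\mat_i \sol{\rvec}\}_{i = 1}^m$, which has dimension $k - m = \ell - 1$; this requires the vectors $\mat_i \sol{\rvec}$ to be independent.

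\begin{itemize}
\item Independence of $\mat_1 \sol{\rvec}, \ldots, \mat_m \sol{\rvec}$ follows from \cref{lemma:genericity}: any nontrivial relation among them would give a nontrivial kernel vector of $\mt{B}$.
\item Injectivity of $\mt{M}$ on $\mathrm{range}(\mt{V})$: the kernel of $\mt{M}$ is $\mathrm{span}(\sol{\rvec})$ by simplicity, and $\vc{g}^\herm \sol{\rvec} = 1 \neq 0$ keeps $\sol{\rvec}$ out of $\mathrm{range}(\mt{V})$.
\item Transversality: we still need $\mathrm{range}(\mt{M}) \cap \mathrm{span}\{\mat_i \sol{\rvec}\} = \{\vc{0}\}$, so that applying $\mt{W}^\herm$ does not annihilate anything in $\mt{M}\,\mathrm{range}(\mt{V})$. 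The left null space of $\mt{M}$ is $\mathrm{range}(\mt{M})^\perp$. A vector $\sum_i c_i \mat_i \sol{\rvec}$ lies in $\mathrm{range}(\mt{M})$ only if every $\lvec_j^\herm$ annihilates it, i.e.\ $\mt{B} \vc{c} = \vc{0}$, so $\vc{c} = \vc{0}$ by \cref{lemma:genericity}.
\end{itemize}

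Together these show that $\mt{W}^\herm \mt{M} \mt{V}$ is an injective square matrix, hence invertible.

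\emph{Bound and attainment.} The bound follows from $\norm{\vc{s}} \leq \sol{\gamma} \epsilon \norm{\sol{\rvec}}$, exactly as in \cref{thm:backwarderror}, which gives $\norm{\err{\rvec}} \leq \norm{\mt{V} (\mt{W}^\herm \mt{M} \mt{V})^\inv \mt{W}^\herm} \sol{\gamma} \epsilon \norm{\sol{\rvec}}$. For attainment, let $\vc{u}$ be a unit vector achieving the operator norm of $\mt{K} = \mt{V} (\mt{W}^\herm \mt{M} \mt{V})^\inv \mt{W}^\herm$. Take
\begin{align*}
\err{\mat_0} &= -\epsilon \norm{\bound_0} \, \vc{u} \sol[\herm]{\rvec} / \norm{\sol{\rvec}}, \\
\err{\mat_i} &= -\epsilon \sign\mleft(\sol{\eigcomp[i]}\mright) \norm{\bound_i} \, \vc{u} \sol[\herm]{\rvec} / \norm{\sol{\rvec}}, \quad i = 1, \ldots, m,
\end{align*}
as in \cref{thm:eigenvaluecond}. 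Then $\vc{s}$ is a multiple of $\vc{u}$ with $\norm{\vc{s}} = \sol{\gamma} \epsilon \norm{\sol{\rvec}}$, and the norm constraints $\norm{\err{\mat_i}} = \epsilon \norm{\bound_i}$ hold with equality. To confirm these perturbations are consistent with an actual perturbed eigenpair (so that the first-order analysis applies), solve the full linearized system for $(\err{\eig}, \err{\rvec})$: first obtain $\err{\eig}$ from the $\mt{Y}^\herm$-projection as in \cref{thm:eigenvaluecond}, then $\err{\rvec}$ from the $\mt{W}^\herm$-projection as above. The implicit function theorem then gives a genuine nearby simple eigenpair. Dividing by $\epsilon \norm{\sol{\rvec}}$ yields \eqref{eq:eigenvectorcond}.
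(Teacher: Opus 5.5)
Your proof is correct and follows the paper's skeleton: first-order expansion, premultiplication by $\mt{W}^\herm$ to remove the $\err{\eig}$ terms, the normalization forcing $\err{\rvec} = \mt{V}\vc{z}$, and the $\sol{\gamma}\epsilon\norm{\sol{\rvec}}$ bound on the forcing term. It departs from the paper at the two delicate points.

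\textbf{Nonsingularity of $\mt{W}^\herm \mep{\sol{\eig}} \mt{V}$.} The paper borders with $[\sol{\rvec},\, \mt{V}]$ and $[\sol{\lvec},\, \mt{W}]$ and block-triangularizes $[\sol{\lvec},\, \mt{W}]^\herm \mep{\vc{\mu}} [\sol{\rvec},\, \mt{V}]$. It then argues via a determinant factorization at a moving point $\vc{\mu}$, together with an asserted complementarity of $\col(\mep{\vc{\mu}})$ and the kernel of the bordering matrix. At $\vc{\mu} = \sol{\eig}$, its claim that the product ``drops rank only by one'' is, after the triangularization, equivalent to the very nonsingularity being proved. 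Your kernel-chasing argument instead derives everything from the nonsingularity of $\mt{B}$ in \cref{lemma:genericity}. You get independence of the $\mat_i \sol{\rvec}$, which also justifies that a full-rank $\mt{W}$ with $\ell - 1$ columns exists and spans the whole orthogonal complement; the paper takes this for granted. You also get the transversality $\col(\mep{\sol{\eig}}) \cap \spn\{\mat_1\sol{\rvec}, \ldots, \mat_m\sol{\rvec}\} = \{\vc{0}\}$, which is exactly the complementarity fact the paper leaves unproved.

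\textbf{Attainment.} The paper stops at the upper bound and asserts equality. You exhibit an attaining rank-one perturbation aligned with a maximizing unit vector of $\mt{K}$, and justify the perturbed eigenpair by the implicit function theorem. So your version actually proves the equality in the statement.

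\textbf{One step to make explicit.} When writing it up, say that transversality plus the count $(\ell - 1) + m = k$ makes $[\mt{Y},\, \mt{W}]$ nonsingular. Then the $\mt{Y}^\herm$- and $\mt{W}^\herm$-projections together are equivalent to the full linearized equation. Equivalently, the bordered Jacobian of $(\eig, \rvec) \mapsto (\mep{\eig}\rvec,\, \vc{g}^\herm\rvec - 1)$ is nonsingular. That is precisely the hypothesis the implicit function theorem needs, and it also legitimizes the $\complexity{\epsilon^2}$ remainders in your upper-bound step.
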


        \begin{proof}
            The proof consists of three main steps: (i) projecting the perturbed problem onto a smaller subproblem, (ii) reformulating the eigenvector perturbation, and (iii) putting everything together in a norm-wise relative eigenvector condition number.

            (i) Consider the perturbed problem for sufficiently small perturbations, in which $\err{\rvec}$ is unique through the normalization with $\vc{g}$.
            We have that
            \begin{equation}
                \left(\mat_0 + \err{\mat_0} \right) \left( \sol{\rvec} + \err{\rvec} \right) + \sum_{i = 1}^m \left(\sol{\eigcomp[i]} + \err{\eigcomp[i]}\right) \left(\mat_i + \err{\mat_i}\right) \left(\sol{\rvec} + \err{\rvec}\right) = \vc{0},
            \end{equation}
            which is equal to  
            \begin{equation}
                \mep{\sol{\eig}} \err{\rvec} = - \err{\mep{\sol{\eig}}} \sol{\rvec} - \sum_{i = 1}^m \err{\eigcomp[i]} \mat_i \sol{\rvec}
            \end{equation}
            when we only consider first-order terms.
            The matrix $\mt{W}$ spans the orthogonal complement of $\spn\left\lbrace \mat_1 \sol{\rvec}, \mat_2 \sol{\rvec}, \ldots, \mat_m \sol{\rvec} \right\rbrace$.
            Premultiplying with $\mt{W}^\herm$ removes the last term and gives
            \begin{equation}
                \label{eq:projection}
                \mt{W}^\herm \mep{\sol{\eig}} \err{\rvec} = - \mt{W}^\herm \err{\mep{\sol{\eig}}} \sol{\rvec}.
            \end{equation}

            (ii) Define $\mt{M} = \begin{bmatrix} \sol{\rvec} & \mt{V} \end{bmatrix} \in \Cset^{\ell \times \ell}$ and $\mt{N} = \begin{bmatrix} \sol{\lvec} & \mt{W} \end{bmatrix} \in \Cset^{k \times l}$.
            To show that $\mt{M}$ is nonsingular, we consider $\mt{M} \vc{v} = \vc{0}$ and note that its first element $v_1$ is equal to zero because $0 = \vc{g}^\herm \mt{M} \vc{v} = \vc{e}_1^\tr \vc{v} = v_1$.
            Consequently, $\mt{M} \vc{v} = \mt{V} \vc{v}_2$, which is only zero when $\vc{v}_2$ (the remaining elements in $\vc{v}$) is zero, showing that $\mt{M}$ is nonsingular.
            Let $\err{\rvec} = \mt{M} \vc{x}$ for an unknown vector $\vc{x} \in \mathbb{C}^l$, then we can write
            \begin{align}
                \mt{W}^\herm \mep{\sol{\eig}} \err{\rvec} & = \mt{W}^\herm \mep{\sol{\eig}} \mt{M} \vc{x} \\
                & = \mt{W}^\herm \begin{bmatrix} \mep{\sol{\eig}} \sol{\rvec} & \mep{\sol{\eig}} \mt{V} \end{bmatrix} \begin{bmatrix} \vc{x}_1 \\ \vc{x}_2 \end{bmatrix} \\
                & = \mt{W}^\herm \mep{\sol{\eig}} \mt{V} \vc{x}_2.
            \end{align}
            Before continuing, we need to prove that $\mt{W}^\herm \mep{\sol{\eig}} \mt{V}$ is nonsingular.
            We use that
            \begin{align}
                \mt{N}^\herm \mep{\vc{\mu}} \mt{M} & = 
                \begin{bmatrix}
                    \lvec^\herm \\
                    \mt{W}^\herm
                \end{bmatrix} 
                \begin{bmatrix}
                    \sum_{i = 1}^m \left(\mu_i - \eigcomp[i]\right) \mat_i \sol{\rvec} & \mep{\vc{\mu}} \mt{V}
                \end{bmatrix} \\
                & = 
                \begin{bmatrix}
                    \lvec^\herm \sum_{i = 1}^m \left(\mu_i - \eigcomp[i]\right) \mat_i \sol{\rvec} & \lvec^\herm \mep{\vc{\mu}} \mt{V} \\
                    \mt{0} & \mt{W}^\herm \mep{\vc{\mu}} \mt{V}
                \end{bmatrix}.
            \end{align}
            Hence, $\det\mleft(\mt{N}^\herm \mep{\vc{\mu}}\mright) \det\mleft(\mt{M}\mright) = \lvec^\herm \sum_{i = 1}^m \left(\mu_i - \eigcomp[i]\right) \mat_i \sol{\rvec} \det\mleft(\mt{W}^\herm \mep{\vc{\mu}} \mt{V}\mright)$.
            Since $\mt{M}$ is nonsingular and $\mt{N}^\herm \mep{\vc{\mu}}$ only drops rank by one for $\vc{\mu} = \sol{\eig}$, it follows that $\det\mleft(\mt{W}^\herm \mep{\vc{\mu}} \mt{V}\mright)$ is nonzero for $\vc{\mu} = \sol{\eig}$, showing that it is nonsingular.
            The rank of the matrix product $\mt{N}^\herm \mep{\vc{\mu}}$ follows from the direct sum condition $\col\mleft(\mep{\vc{\mu}}\mright) \oplus \ker\mleft(\mt{N}^\herm\mright) = \Cset^k$ when $\vc{\mu} \neq \sol{\eig}$: 
            Since the rMEP is regular, $\mep{\vc{\mu}}$ has full column rank for $\vc{\mu} \notin \spec{\mep{\eig}}$, so $\col\mleft(\mep{\vc{\mu}}\mright)$ has dimension $\ell$ and $\ker\mleft(\mt{N}^\herm\mright)$ has dimension $k - \ell$ by construction of $\mt{N}$. 
            Their dimensions sum to $k$, and they are complementary by the choice of $\mt{N}$, so $\mt{N}^\herm$ is injective on $\col\mleft(\mep{\vc{\mu}}\mright)$ and $\mt{N}^\herm \mep{\vc{\mu}}$ has full column rank $\ell$.

            (iii) Combining the projected equation~\eqref{eq:projection} from (i) with the decomposition from (ii), we have that
            \begin{equation}
                \mt{W}^\herm \mep{\sol{\eig}} \mt{V} \vc{x}_2 = -\mt{W}^\herm \err{\mep{\sol{\eig}}} \sol{\rvec}.
            \end{equation}
            Since $\mt{W}^\herm \mep{\sol{\eig}} \mt{V}$ is nonsingular, we can solve for $\vc{x}_2$:
            \begin{equation}
                \vc{x}_2 = -\left(\mt{W}^\herm \mep{\sol{\eig}} \mt{V}\right)^\inv \mt{W}^\herm \err{\mep{\sol{\eig}}} \sol{\rvec}.
            \end{equation}
            Substituting back via $\err{\rvec} = \mt{V} \vc{x}_2$, the norm of the eigenvector perturbation satisfies
            \begin{align}
                \norm{\err{\rvec}} & = \norm{\mt{V} \left(\mt{W}^\herm \mep{\sol{\eig}} \mt{V}\right)^\inv \mt{W}^\herm \err{\mep{\sol{\eig}}} \sol{\rvec}} \\
                & \leq \norm{\mt{V} \left(\mt{W}^\herm \mep{\sol{\eig}} \mt{V}\right)^\inv \mt{W}^\herm} \norm{\sol{\rvec}} \sol{\gamma} \epsilon,
            \end{align}
            where the inequality follows from the backward error analysis in \cref{sec:error}.
            Consequently, it follows that $\cond{\sol{\rvec}}$ is given by the expression in~\eqref{eq:eigenvectorcond}.
        \end{proof}

\section{Multiparameter pseudospectrum}
    \label{sec:pseudospectrum}

    Pseudospectra help to analyze rMEPs by characterizing the sensitivity of the approximate eigenvalues to perturbations in the coefficient matrices.
    After discussing definitions and properties (\cref{sec:pseudotheory}), we show how to visualize the pseudospectrum of an rMEP (\cref{sec:pseudovisualization}) and dive into its practical computation (\cref{sec:pseudocomputation}).

    \subsection{Different definitions and some useful properties}
        \label{sec:pseudotheory}

        The $\epsilon$-pseudospectrum of an rMEP $\mep{\eig} \rvec = \vc{0}$ is a set of tuples $\pseudo{\mep{\eig}}$ given by the following definition.

        \begin{definition}
            \label{def:pseudospectrum}
            For an rMEP $\mep{\eig}$ with coefficient matrices $\mat_0, \mat_1, \ldots, \mat_m$, the set
            \begin{equation}
                \pseudo{\mep{\eig}} = \big\lbrace\comp{\eig} \in \Cset^m : \exists \comp{\rvec} \neq \vc{0}, \per{\comp{\eig}} \comp{\rvec} = \vc{0} \text{~with~} \norm{\err{\mat_i}} \leq \epsilon \norm{\bound_i}\big\rbrace
            \end{equation}
            is the \emph{$\epsilon$-pseudospectrum} of $\mep{\eig}$.
            The matrix polynomial $\per{\eig}$ corresponds to $\mep{\eig}$ with perturbated coefficient matrices $\mat_i + \err \mat_i$.
        \end{definition}

        \begin{theorem}
            \label{thm:equivalences}
            For a linear rMEP $\mep{\eig}$, the definitions of the sets
            \begin{enumerate}[label=(\roman*)]
                \item $\pseudo{\mep{\eig}} = \big\lbrace\comp{\eig} \in \Cset^m : \exists \comp{\rvec} \, \text{with} \, \norm{\comp{\rvec}} = 1, \norm{\mep{\comp{\eig}} \comp{\rvec}} \leq \epsilon \comp{\gamma}\big\rbrace$,
                \item $\pseudo{\mep{\eig}} = \big\lbrace\comp{\eig} \in \Cset^m : \sigma_\mathrm{min} \mleft(\mep{\comp{\eig}}\mright) \leq \epsilon \comp{\gamma}\big\rbrace$, and
                \item $\pseudo{\mep{\eig}} = \big\lbrace\comp{\eig} \in \Cset^m : \Vert\mep{\comp{\eig}}^\pinv\Vert \geq (\epsilon \comp{\gamma})^\inv\big\rbrace$,
            \end{enumerate}
            are equivalent with \cref{def:pseudospectrum}.
        \end{theorem}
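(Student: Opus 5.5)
The plan is to prove the chain Definition~\ref{def:pseudospectrum} $\Leftrightarrow$ (i) $\Leftrightarrow$ (ii) $\Leftrightarrow$ (iii). The first equivalence is essentially \cref{thm:backwarderror}: a point $\comp{\eig}$ lies in the set of \cref{def:pseudospectrum} exactly when there is a nonzero $\comp{\rvec}$ with $\backward{\comp{\eig}, \comp{\rvec}} \leq \epsilon$. By scale invariance of the backward error in $\comp{\rvec}$, we may normalize $\norm{\comp{\rvec}} = 1$.

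For the direction from \cref{def:pseudospectrum} to (i), take perturbations with $\per{\comp{\eig}} \comp{\rvec} = \vc{0}$. Then write $\mep{\comp{\eig}} \comp{\rvec} = -\sum_i \comp{\eigcomp[i]} \err{\mat_i} \comp{\rvec}$ and bound it by the triangle inequality, exactly as in the first half of the proof of \cref{thm:backwarderror}. This gives $\norm{\mep{\comp{\eig}} \comp{\rvec}} \leq \epsilon \comp{\gamma}$.

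For the converse, given $\comp{\rvec}$ with $\norm{\comp{\res}} \leq \epsilon \comp{\gamma}$, I will use the rank-one perturbations from the proof of \cref{thm:backwarderror}:
\begin{equation*}
\err{\mat_0} = -\norm{\bound_0} \comp{\res} \comp{\rvec}^\herm / \comp{\gamma}, \qquad \err{\mat_i} = -\sign(\comp{\eigcomp[i]}) \norm{\bound_i} \comp{\res} \comp{\rvec}^\herm / \comp{\gamma}.
\end{equation*}
Summing with weights $\comp{\eigcomp[i]}$ gives $\sum_i \comp{\eigcomp[i]} \err{\mat_i} \comp{\rvec} = -\comp{\res}$, so the perturbed residual vanishes. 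The norm of each perturbation is $\norm{\bound_i} \norm{\comp{\res}} / \comp{\gamma} \leq \epsilon \norm{\bound_i}$.

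The degenerate case $\comp{\gamma} = 0$ needs separate treatment. There all $\norm{\bound_i}$ vanish, so no perturbation is allowed, and both sets reduce to the exact spectrum. I would dispatch this in one sentence.

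The step (i) $\Leftrightarrow$ (ii) follows from $\min_{\norm{\rvec} = 1} \norm{\mep{\comp{\eig}} \rvec} = \sigma_\mathrm{min}(\mep{\comp{\eig}})$, as in the proof of the optimal backward error theorem. The minimum is attained on the compact unit sphere, so existence of $\comp{\rvec}$ is equivalent to the singular-value inequality.

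For (ii) $\Leftrightarrow$ (iii), note that $\mep{\comp{\eig}}$ is $k \times \ell$ with $k \geq \ell$. When it has full column rank, its pseudoinverse has 2-norm $1/\sigma_\ell$, where $\sigma_\ell = \sigma_\mathrm{min}$ is the smallest of its $\ell$ singular values. The inequality $\sigma_\mathrm{min} \leq \epsilon \comp{\gamma}$ then flips to $\norm{\mep{\comp{\eig}}^\pinv} \geq (\epsilon \comp{\gamma})^\inv$.

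The main obstacle is the rank-deficient case, i.e. $\comp{\eig}$ an exact eigenvalue with $\sigma_\mathrm{min} = 0$. There the Moore--Penrose inverse only inverts the nonzero singular values, so its norm is finite. I would adopt the convention, standard in the pseudospectra literature (Wright--Trefethen), that $\norm{\mep{\comp{\eig}}^\pinv} = \infty$ when $\mep{\comp{\eig}}$ loses column rank. I would state explicitly that $\sigma_\mathrm{min}$ denotes the $\ell$-th singular value, which is zero there, so the equivalence holds under this convention.
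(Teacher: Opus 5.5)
Your proposal is correct and follows essentially the paper's route: the definition is matched with (i) by the triangle-inequality bound in one direction and a rank-one perturbation in the other (your $\err{\mat_i} \propto \comp{\res}\comp{\rvec}^\herm$ is the paper's existential $\mt{H}$ made explicit, with $\mt{H} = -\comp{\res}\comp{\rvec}^\herm/\norm{\comp{\res}}$), and (i), (ii), (iii) are then linked through $\min_{\norm{\rvec} = 1}\norm{\mep{\comp{\eig}}\rvec} = \sigma_{\mathrm{min}}(\mep{\comp{\eig}})$ and the pseudoinverse norm. Your weights $\norm{\bound_i}$ (the paper writes $\norm{\mat_i}$, which is only right for relative perturbations) and your convention $\norm{\mep{\comp{\eig}}^\pinv} = \infty$ at rank-deficient points are genuine refinements---the convention is actually needed, since the Moore--Penrose norm stays finite at exact eigenvalues (e.g.\ it equals $1/(2\sqrt{2})$ at $(1,1)$ in \cref{ex:runningexample}), so the paper's identity $\norm{\mep{\comp{\eig}}^\pinv} = \sigma_{\mathrm{min}}^{-1}$ fails there; your only slip is in the $\comp{\gamma} = 0$ aside, where just $\bound_0$ and the $\bound_i$ with $\comp{\eigcomp[i]} \neq 0$ must vanish, which does not affect your conclusion.
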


        \begin{proof}
            We start by showing that \cref{def:pseudospectrum} is equivalent with (i). 
            \begin{itemize}
                \item Suppose that $\comp{\eig} \in \pseudo{\mep{\eig}}$, then $\mep{\comp{\eig}} \comp{\rvec} = - \err{\mat_0} \comp{\rvec} - \sum_{i = 1}^m \eigcomp[i] \err{\mat_i} \comp{\rvec}$, implying that $\norm{\mep{\comp{\eig}} \comp{\rvec}} = \norm{\err{\mat_0} \comp{\rvec} + \sum_{i = 1}^m \eigcomp[i] \err{\mat_i} \comp{\rvec}} \leq \norm{\err{\mat_0} + \sum_{i = 1}^m \eigcomp[i] \err{\mat_i}}$.
                In the proof of \cref{thm:backwarderror}, we have already shown that this last expression is smaller or equal to $\epsilon \comp{\gamma}$, proving that $\comp{\eig}$ is also part of $\pseudo{\mep{\eig}}$ as defined in (i), which proves the $\Rightarrow$ direction.
                \item Let $\comp{\eig} \in \pseudo{\mep{\eig}}$ now be part of the set defined in (i) for the $\Leftarrow$ direction.
                Consider the residual $\res = \mep{\comp{\eig}} \comp{\rvec}$. 
                There exists a matrix $\mt{H} \in \Cset^{k \times \ell}$ with $\norm{\mt{H}} = 1$ so that $\mt{H} \comp{\rvec} = - \res/\norm{\res}$.
                If $\bound_0 = \norm{\res} \mt{H}$, then $\left(\mep{\comp{\eig}} + \bound_0\right) \comp{\rvec} = \res + \norm{\res} \mt{H} \comp{\rvec} = \vc{0}$.
                Consequently, $\norm{\bound_0} = \norm{\mep{\comp{\eig}} \comp{\rvec}} \leq \epsilon \comp{\gamma}$.
                We have shown that a point that belongs to the set in (i) also satisfies the matrix equation $\left(\mep{\comp{\eig}} + \bound_0\right) \comp{\rvec} = \vc{0}$.
                It is now easy to see $\bound_0$ as a perturbation to $\mat_0$, resulting in \cref{def:pseudospectrum}.
                To make it more complete, we can distribute $\bound_0$ over the different coefficient matrices, 
                \begin{gather}
                    \err{\mat_0} = \frac{\norm{\mat_0} \bound_0}{\comp{\gamma}}, \\
                    \err{\mat_i} = \frac{\sign\mleft(\eigcomp[i]\mright) \norm{\mat_i} \bound_0}{\comp{\gamma}}, \quad i = 1, 2, \ldots, m,
                \end{gather}
                proving the other direction.
            \end{itemize}
            Next, we show the equivalence of (i), (ii), and (iii).
            \begin{itemize}
                \item (i) $\Rightarrow$ (ii): 
                Assume that $\comp{\eig} \in \pseudo{\mep{\eig}}$ by (i).
                Then there exists a vector $\comp{\rvec}$ for which $\norm{\comp} = 1$ so that $\norm{\mep{\comp{\eig}} \comp{\rvec}} \leq \epsilon \comp{\gamma}$, which is equivalent with the definition in (ii) via $\sigma_\mathrm{min}\mleft(\mep{\comp{\eig}}\mright) = \min_{\rvec} \norm{\mep{\comp{\eig}} \rvec} \leq \norm{\mep{\comp{\eig}} \comp{\rvec}} \leq \epsilon \comp{\gamma}$.
                \item (ii) $\Rightarrow$ (iii): 
                We can write $\Vert\mep{\comp{\eig}}^\pinv\Vert = \sigma^{-1}_{\mathrm{min}}(\mep{\comp{\eig}})$. 
                Subsequently, we know from (ii) that $\comp{\eig} \in \Cset^m: \sigma_\mathrm{min}\mleft(\mep{\comp{\eig}}\mright) \leq \epsilon \comp{\gamma}$, which implies $\sigma_\mathrm{min}^\inv\mleft(\mep{\comp{\eig}}\mright) \geq (\epsilon \comp{\gamma})^\inv$, thus $\Vert\mep{\comp{\eig}}^\pinv\Vert \geq (\epsilon \comp{\gamma})^\inv$.
                \item (iii) $\Rightarrow$ (i):
                Since $\Vert\mep{\comp{\eig}}^\pinv\Vert \geq (\epsilon \comp{\gamma})^\inv$, we can write that $\sigma_\mathrm{min}^\inv\mleft(\mep{\comp{\eig}}\mright) \geq (\epsilon \comp{\gamma})^\inv$, or $\sigma_\mathrm{min}\mleft(\mep{\comp{\eig}}\mright) \leq \epsilon \comp{\gamma}$. 
                We know that $\min_{\norm{x} = 1} \norm{\mep{\comp{\eig}} \vc{x}} = \sigma_\mathrm{min}\mleft(\mep{\comp{\eig}}\mright)$, noting that the minimum norm is achieved at some $\comp{\rvec}$, then we have that $\Vert\mep{\comp{\eig}} \comp{\rvec}\Vert \leq \epsilon \comp{\gamma}$.
            \end{itemize}
            Combining all equivalences shows that all expressions are equivalent.
        \end{proof}

        \begin{corollary}
            \label{cor:singularvalues}
            We can re-write the second definition in \cref{thm:equivalences} in terms of the norm-wise backward eigenvalue error, so that $\pseudo{\mep{\eig}}$ corresponds to the set of all approximate eigenvalues whose norm-wise backward eigenvalue error $\backward{\comp{\eig}} \leq \epsilon$.
        \end{corollary}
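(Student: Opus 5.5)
The corollary is a direct rewriting of characterization (ii) in \cref{thm:equivalences}, using the closed-form expression for the norm-wise backward eigenvalue error from \cref{sec:optimalerror}. The plan is to put the two side by side and check that the defining inequalities coincide, including at degenerate points.

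\emph{Step 1.} Recall that the optimal backward error theorem gives $\backward{\comp{\eig}} = \sigma_\mathrm{min}\mleft(\mep{\comp{\eig}}\mright)/\comp{\gamma}$. Here $\comp{\gamma} = \norm{\bound_0} + \sum_{i=1}^m \abs{\comp{\eigcomp[i]}}\norm{\bound_i}$ is exactly the quantity appearing in \cref{thm:equivalences}.

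\emph{Step 2.} Assume first that $\comp{\gamma} > 0$. This always holds for the absolute choice $\bound_i = \mt{1}/\sqrt{k\ell}$, and for relative perturbations whenever $\mat_0 \neq \mt{0}$. Dividing the inequality $\sigma_\mathrm{min}\mleft(\mep{\comp{\eig}}\mright) \leq \epsilon\comp{\gamma}$ in (ii) by the positive number $\comp{\gamma}$ gives the equivalent inequality $\backward{\comp{\eig}} \leq \epsilon$. So the set in (ii) equals $\{\comp{\eig} \in \Cset^m : \backward{\comp{\eig}} \leq \epsilon\}$. By \cref{thm:equivalences}, this set is the $\epsilon$-pseudospectrum of \cref{def:pseudospectrum}.

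\emph{Step 3.} The only real obstacle is the degenerate case $\comp{\gamma} = 0$, where the quotient is undefined. Here I would adopt the convention stated after \cref{thm:backwarderror}: the backward error is $0$ if the residual can vanish, and $\infty$ otherwise.

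\begin{itemize}
\item If $\sigma_\mathrm{min}\mleft(\mep{\comp{\eig}}\mright) = 0$, then $\comp{\eig}$ is an exact eigenvalue and satisfies (ii) with equality. The convention gives $\backward{\comp{\eig}} = 0 \leq \epsilon$, so $\comp{\eig}$ belongs to both sets.
\item If $\sigma_\mathrm{min}\mleft(\mep{\comp{\eig}}\mright) > 0$, then (ii) fails, since $\sigma_\mathrm{min} > 0 = \epsilon\comp{\gamma}$. The convention gives $\backward{\comp{\eig}} = \infty > \epsilon$, so $\comp{\eig}$ belongs to neither set.
\end{itemize}

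With this convention the two sets agree in every case, which completes the identification.
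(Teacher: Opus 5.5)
Your proposal is correct and matches the paper's argument, which the paper leaves implicit: combine the closed form $\eta(\tilde{\lambda}) = \sigma_{\min}(\mathcal{A}(\tilde{\lambda}))/\tilde{\gamma}$ from the optimal backward error theorem with characterization (ii) of \cref{thm:equivalences}, and divide by $\tilde{\gamma}$. Your separate treatment of the degenerate case $\tilde{\gamma} = 0$, using the $0$/$\infty$ convention (which is exactly what the minimization over a trivial or empty feasible set gives), is a sound detail that the paper does not spell out.
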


        \begin{example}
            In \cref{fig:pseudoexamples}, we visualize the pseudospectra, $\pseudo{\mep{\eig}}$, of the two-parameter eigenvalue problems in both \cref{ex:runningexample} and \cref{ex:otherexample}.
        \end{example}

        \begin{figure}
            \centering
            \begin{subfigure}{0.45\textwidth}
                \centering
                \tikzexternalenable
                \input{figures/runningexamplecontour.tikz}
                \tikzexternaldisable
                \caption{$\pseudo{\mep{\eig}}$ for \cref{ex:runningexample}}
                \label{fig:runningexamplecontour}
            \end{subfigure}
            \begin{subfigure}{0.45\textwidth}
                \centering
                \includegraphics{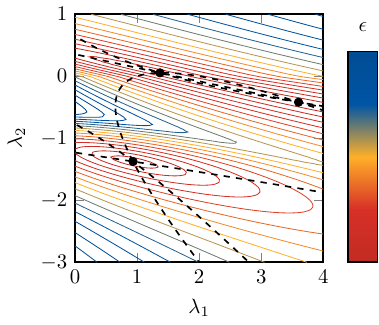}
                \caption{$\pseudo{\mep{\eig}}$ for \cref{ex:otherexample}}
                \label{fig:otherexamplecontour}
            \end{subfigure}
            \caption{Visualization of the pseudospectra, $\pseudo{\mep{\eig}}$, of the two-parameter eigenvalue problems in both \cref{ex:runningexample} and \cref{ex:otherexample}. The secular equations are also shown (\ref{plot:secularequations}).}
            \label{fig:pseudoexamples}
        \end{figure} 

%
    \subsection{Visualization of the pseudosectrum}
        \label{sec:pseudovisualization}

        Visualizing the pseudospectrum of a single-parameter eigenvalue problem in the complex plane is straightforward: the two axes represent the real and imaginary parts of the eigenvalue, and the plane is partitioned by contour lines corresponding to the minimum singular value of the matrix pencil evaluated at a given complex point. 
        A similar complete representation is not tractable for more than two spectral parameters. 
        Hence, we focus here on a special class of rMEPs for which all eigenvalues are real, allowing a visualization of the pseudospectrum of two-parameter eigenvalue problems in the two-dimensional plane.

        \begin{definition}
            \label{def:rightdefinite}
            An rMEP $\mep{\eig} \rvec = \vc{0}$ is \emph{(nested) right definite} if there exists a nonsingular, right definite coupled square multiparameter eigenvalue problem of the form
            \begin{equation}
                \mt{P}_i^\tr \mep{\eig} \rvec = \vc{0}, \quad i = 1, 2, \ldots, m
            \end{equation}
            where $\mt{P}_i \in \Rset^{\ell \times k}$ are row selection matrices that select $\ell$ different rows.
        \end{definition}

        Recall that a nonsingular, right definite coupled square multiparameter eigenvalue $\smep[i]{\eig} \rvec_i = \mt{V}_{i0} \rvec_i + \sum_{j = 1}^m \eigcomp[j] \mt{V}_{ij} \rvec_i = \vc{0}$, for $i = 1, 2, \ldots, m$, is a problem that has a positive definite delta matrix $\mt{\Delta}_0 = \sum_{\sigma \in S_m} \sign(\sigma) \mt{V}_{1\sigma_1} \otimes \mt{V}_{2\sigma_2} \otimes \cdots \otimes \mt{V}_{m\sigma_m}$, where $\otimes$ denotes the Kronecker operator and $\sigma \in S_m$ corresponds to the permutations of a set of $m$ numbers~\cite{atkinson1972multiparameter}.
        All eigenvalues of such nonsingular, right definite problem are real~\cite{hochstenbach2003backward}.

        \begin{lemma} 
            \label{lemma:smepintersection}
            The spectrum $\spec{\mep{\eig}}$ of an rMEP $\mep{\eig} \rvec = \vc{0}$ is given by, with $L = \binom{k}{\ell}$,
            \begin{equation}
                \spec{\mep{\eig}} = \bigcap_{i = 1}^L \spec{\smep{\eig}},
            \end{equation}  
            where $\smep{\eig} = \mt{P}_i^\tr \mep{\eig} \in \mathbb{R}^{\ell \times \ell}$, $i = 1, 2, \ldots, L$ are square submatrix polynomials for which each $\mt{P}_i$ corresponds to a distinct selection of $\ell$ rows.
        \end{lemma}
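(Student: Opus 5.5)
The plan is a double inclusion, using the rank characterization of \cref{cor:rankdefinition} together with the secular equations of \cref{def:secularequations}. Here $\spec{\smep{\eig}}$ is read as the set of $\eig \in \Cset^m$ with $\det \smep{\eig} = \chi_i(\eig) = 0$, that is, the points where the square submatrix polynomial $\mt{P}_i^\tr \mep{\eig}$ is singular. Since $\mep{\eig}$ has full normal rank, we have $\rho = \ell$, so an eigenvalue is exactly a point where $\rank \mep{\sol{\eig}} < \ell$.

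\emph{Inclusion $\subseteq$.} Let $\sol{\eig} \in \spec{\mep{\eig}}$, with eigenvector $\sol{\rvec} \neq \vc{0}$ satisfying $\mep{\sol{\eig}} \sol{\rvec} = \vc{0}$.
\begin{itemize}
\item For every row selection we then have $\smep{\sol{\eig}} \sol{\rvec} = \mt{P}_i^\tr \mep{\sol{\eig}} \sol{\rvec} = \vc{0}$.
\item Hence each square $\ell \times \ell$ matrix $\smep{\sol{\eig}}$ is singular, so $\chi_i(\sol{\eig}) = 0$ for all $i = 1, \ldots, L$.
\item Therefore $\sol{\eig}$ lies in every $\spec{\smep{\eig}}$. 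Note that the same vector $\sol{\rvec}$ works for all $i$.
\end{itemize}

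\emph{Inclusion $\supseteq$.} Suppose $\chi_i(\sol{\eig}) = 0$ for all $i$.
\begin{itemize}
\item Then every $\ell \times \ell$ minor of the $k \times \ell$ matrix $\mep{\sol{\eig}}$ vanishes, because the $\sigma_i$ run over all selections of $\ell$ rows.
\item By the standard characterization of rank through minors, $\rank \mep{\sol{\eig}} \le \ell - 1 < \ell = \nrank \mep{\eig}$.
\item By \cref{cor:rankdefinition}, $\sol{\eig}$ is an eigenvalue. Equivalently, the right null space of $\mep{\sol{\eig}}$ is nontrivial, so a common eigenvector exists.
\end{itemize}

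The only real obstacle is the $\supseteq$ direction. Each square subproblem may be singular at $\sol{\eig}$ with a \emph{different} null vector, so a naive argument that intersects the individual eigenvectors fails. The minor-based rank argument avoids this, because it concludes directly that a single common null vector exists. I would also state explicitly that both sides are taken as sets of $m$-tuples. A subtlety is that an individual $\smep{\eig}$ may be singular as a pencil, with $\chi_i \equiv 0$; its spectrum is then all of $\Cset^m$, which does not affect the intersection. Finally, the field ($\Rset$ versus $\Cset$) in the statement plays no role.
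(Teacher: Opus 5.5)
Your proposal is correct and matches the paper's proof: the forward inclusion uses the common eigenvector through $\mt{P}_i^\tr \mep{\sol{\eig}} \sol{\rvec} = \vc{0}$, and the reverse inclusion uses the vanishing of all $\ell \times \ell$ maximal minors to force $\rank(\mep{\sol{\eig}}) < \ell$, which yields a single common null vector. Your extra remarks are sound points that the paper leaves implicit: the failure of the naive argument of intersecting individual null vectors, and the harmlessness of identically singular subpencils.
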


        \begin{proof}
            If $\sol{\eig} \in \spec{\mep{\eig}}$, then $\mep{\eig}$ drops rank at $\sol{\eig}$ (\cref{cor:rankdefinition}) and there exists a nonzero vector $\sol{\rvec} \in \Cset^\ell_{\vc{0}}$ such that $\mep{\sol{\eig}} \sol{\rvec} = \vc{0}$. 
            Subsequently, $\mt{P}_i^\tr \mep{\sol{\eig}} \sol{\rvec} = \smep{\sol{\eig}} \sol{\rvec} = \vc{0}$.  Therefore, $\spec{\mep{\eig}} \subseteq \bigcap_{i = 1}^L \spec{\smep{\eig}}$.
            Conversely, let $\sol{\eig} \in \bigcap_{i = 1}^L \spec{\smep{\eig}}$.
            Every $\ell \times \ell$ square polynomial submatrix $\smep{\sol{\eig}}$ is singular.
            Hence, all $\ell \times \ell$ maximal minors of $\mep{\sol{\eig}}$ vanish and $\rank(\mep{\sol{\eig}}) < \ell$. 
            There exists a nonzero vector $\sol{\rvec} \in \Cset^\ell_{\vc{0}}$ such that $\mep{\sol{\eig}} \sol{\rvec} = \vc{0}$. 
            So, $\sol{\eig} \in \spec{\mep{\eig}}$ and $\bigcap_{i = 1}^L \spec{\smep{\eig}} \subseteq \spec{\mep{\eig}}$. 
            Combining both inclusions proves the result. 
        \end{proof}

        \begin{remark}
            In \cref{lemma:smepintersection}, the intersection runs over all $L = \binom{k}{\ell}$ possible square submatrix polynomials.
            By contrast, the definition of the (nested) right definite rMEP (\cref{def:rightdefinite}) requires only $m$ specific row selections---one per spectral parameter---that together form a right definite coupled square multiparameter eigenvalue problem. 
            One of the repercussions is that the spectra in \cref{lemma:smepintersection} agree, while the spectrum is a subset of the coupled square multiparameter eigenvalue problem in \cref{def:rightdefinite}.
        \end{remark}

        \Cref{lemma:smepintersection} shows that the spectrum of the rMEP is related to the spectra of the associated square polynomial submatrices.
        We now extend this idea to the pseudosectrum.
        \begin{lemma} 
            \label{lemma:pseudospectrumintersection}
            The $\epsilon$-pseudospectrum $\pseudo{\mep{\eig}}$ of an rMEP $\mep{\eig} \rvec = \vc{0}$ satisfies, with $L = \binom{k}{\ell}$,
            \begin{equation}
                \pseudo{\mep{\eig}} \subseteq \bigcap_{i = 1}^L \pseudo{\smep{\eig}},
            \end{equation}
            where $\smep{\eig} = \mt{P}_i^\tr \mep{\eig} \in \mathbb{R}^{\ell \times \ell}$, $i = 1, 2, \ldots, L$ are square submatrix polynomials for which each $\mt{P}_i$ corresponds to a distinct selection of $\ell$ rows.
        \end{lemma}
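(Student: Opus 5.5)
The plan is to show that a point of the rMEP pseudospectrum also lies in the pseudospectrum of every square subproblem, by restricting the perturbation that witnesses membership. Take $\comp{\eig} \in \pseudo{\mep{\eig}}$. By \cref{def:pseudospectrum} there exist perturbations $\err{\mat_j}$ with $\norm{\err{\mat_j}} \leq \epsilon \norm{\bound_j}$, $j = 0, 1, \ldots, m$, and a vector $\comp{\rvec} \neq \vc{0}$ such that $\per{\comp{\eig}} \comp{\rvec} = \vc{0}$.

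For a fixed row selection $\mt{P}_i$ I premultiply by $\mt{P}_i^\tr$. This gives
\begin{equation}
\sum_{j = 0}^m \comp{\eigcomp[j]} \left(\mt{P}_i^\tr \mat_j + \mt{P}_i^\tr \err{\mat_j}\right) \comp{\rvec} = \vc{0}.
\end{equation}
So $\comp{\eig}$ is an exact eigenvalue of a perturbed version of $\smep{\eig}$, with coefficient matrices $\mt{P}_i^\tr \mat_j$ and perturbations $\mt{P}_i^\tr \err{\mat_j}$. The eigenvector is the same $\comp{\rvec} \neq \vc{0}$.

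The remaining point, and the one to handle carefully, is the norm bound. Since $\mt{P}_i^\tr$ consists of orthonormal rows (unit vectors), $\norm{\mt{P}_i^\tr \err{\mat_j}} \leq \norm{\err{\mat_j}} \leq \epsilon \norm{\bound_j}$. The pseudospectrum of the square subproblem must be understood with the same error weights $\bound_j$, equivalently with the same scalar $\comp{\gamma}$. If instead the weights of the subproblem were taken as the restricted matrices $\mt{P}_i^\tr \bound_j$, whose norms may be smaller, the inclusion could fail. I would make this convention explicit.

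To avoid the convention issue, I would also give the singular-value route as a cross-check, using \cref{thm:equivalences}(ii) applied to both problems. A point $\comp{\eig}$ of $\pseudo{\mep{\eig}}$ satisfies $\sigma_\mathrm{min}(\mep{\comp{\eig}}) \leq \epsilon\comp{\gamma}$. For any unit vector $\vc{x}$ we have $\norm{\mt{P}_i^\tr \mep{\comp{\eig}} \vc{x}} \leq \norm{\mep{\comp{\eig}}\vc{x}}$, because row selection can only remove entries. Minimizing over $\vc{x}$ gives $\sigma_\mathrm{min}(\smep{\comp{\eig}}) \leq \sigma_\mathrm{min}(\mep{\comp{\eig}}) \leq \epsilon\comp{\gamma}$, so $\comp{\eig} \in \pseudo{\smep{\eig}}$. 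Because $i$ was arbitrary, intersecting over $i = 1, \ldots, L$ yields the claimed inclusion.

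I would end by remarking that equality is not expected in general. A small $\sigma_\mathrm{min}$ for every square block does not force a common near-null vector, which is exactly why the statement is only an inclusion, unlike \cref{lemma:smepintersection}.
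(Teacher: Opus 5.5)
Your proof is correct, and your singular-value ``cross-check'' is exactly the paper's argument. The paper starts from $\sigma_\mathrm{min}(\mep{\comp{\eig}}) \leq \epsilon\comp{\gamma}$ via \cref{thm:equivalences}. It then uses $\norm{\mt{P}_i^\tr \mep{\comp{\eig}}\vc{x}} \leq \norm{\mep{\comp{\eig}}\vc{x}}$ to get $\sigma_\mathrm{min}(\smep{\comp{\eig}}) \leq \sigma_\mathrm{min}(\mep{\comp{\eig}})$, and applies \cref{thm:equivalences} once more to the square subproblem.

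Your primary route is genuinely different and more elementary. You restrict the witnessing perturbation, $\err{\mat_j} \mapsto \mt{P}_i^\tr \err{\mat_j}$, keep the same eigenvector $\comp{\rvec}$, and read off membership directly from \cref{def:pseudospectrum}. This never needs the constructive reverse direction of \cref{thm:equivalences}. The paper's route, in exchange, gives the pointwise inequality between smallest singular values, which is also what explains why the converse inclusion fails.

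One correction to your commentary: the singular-value route does \emph{not} avoid the convention issue. Applying \cref{thm:equivalences}(ii) to $\smep{\eig}$ gives the threshold $\epsilon\gamma_i$, built from the subproblem's own weights. So concluding membership from $\sigma_\mathrm{min}(\smep{\comp{\eig}}) \leq \epsilon\comp{\gamma}$ requires $\gamma_i = \comp{\gamma}$, which is precisely the convention you flagged. The paper makes this assumption tacitly by writing $\epsilon\gamma$ for both problems.

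For absolute weights the convention holds automatically, since $\mt{1}/\sqrt{k\ell}$ and its $\ell\times\ell$ analogue $\mt{1}/\ell$ both have unit norm. For relative weights $\mt{P}_i^\tr\mat_j$ one only has $\gamma_i \leq \comp{\gamma}$, and the inclusion can really fail. Take $\ell = 1$, $m = 2$, with the first row of $\mep{\eig}$ a constant $\delta \neq 0$ and the second row $1 + \lambda_1 + \lambda_2$. The first $1\times 1$ subproblem then has empty pseudospectrum for $\epsilon < 1$. Meanwhile $(-1, 0)$ lies in the rMEP pseudospectrum whenever $|\delta| \leq 2\epsilon$, since there $\sigma_\mathrm{min} = |\delta|$ and $\comp{\gamma} \geq 2$.

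So fixing the same-weights convention explicitly, as you propose, is needed for both routes, not just for the perturbation-restriction one.
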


        \begin{proof}
            Let $\sol{\eig} \in \pseudo{\mep{\eig}}$, or equivalently by \cref{thm:equivalences},
            $\sigma_\mathrm{min}\mleft(\mep{\sol{\eig}}\mright) \leq \epsilon \gamma$. 
            Now fix any $i \in \{1, 2, \ldots, L\}$. 
            Since $\smep{\sol{\eig}} = \mt{P}_i^\tr \mep{\sol{\eig}}$ and $\mt{P}_i^\tr$ is a row-selection matrix with $\norm{\mt{P}_i^\tr} = 1$, we have that for every $\rvec \in \Cset^\ell$ with $\norm{\rvec} = 1$ the norm $\norm{\smep{\sol{\eig}} \rvec} = \norm{\mt{P}_i^\tr \mep{\sol{\eig}} \rvec} \leq \norm{\mep{\sol{\eig}} \rvec}$.
            Taking the minimum over all unit vectors $\rvec$ gives $\sigma_\mathrm{min}\mleft(\smep{\sol{\eig}}\mright) \leq \sigma_\mathrm{min}\mleft(\mep{\sol{\eig}}\mright)$.
            Therefore, $\sigma_\mathrm{min}\mleft(\smep{\sol{\eig}}\mright) \leq \epsilon \gamma$ and, again by \cref{thm:equivalences}, $\sol{\eig} \in \pseudo{\smep{\eig}}$.
            Since $i$ was arbitrary, it follows that $\sol{\eig} \in \bigcap_{i = 1}^L \pseudo{\smep{\eig}}$ and $\pseudo{\mep{\eig}} \subseteq \bigcap_{i = 1}^L \pseudo{\smep{\eig}}$.
        \end{proof}       

        \begin{remark}
            The converse inclusion in \cref{lemma:pseudospectrumintersection} does not hold in general.
            Since $\smep{\sol{\eig}} = \mt{P}_i^\tr \mep{\sol{\eig}}$ with $\mt{P}_i^\tr$ a row-selection matrix, it follows that $\sigma_\mathrm{min}\mleft(\smep{\sol{\eig}}\mright) \leq \sigma_{\min}\mleft(\mep{\sol{\eig}}\mright)$ for all $i$: deleting rows cannot increase the smallest singular value, or equivalently, adding rows may increase it.
            Consequently, there may exist a point $\sol{\eig} \in \Cset^m$ such that $\sigma_\mathrm{min}\mleft(\smep{\sol{\eig}}\mright) \leq \epsilon \gamma$ for all $i = 1, 2, \ldots, L$, while $\sigma_\mathrm{min}\mleft(\mep{\sol{\eig}}\mright) > \epsilon \gamma$.
            In that case, $\sol{\eig} \in \bigcap_{i=1}^L \pseudo{\smep{\eig}}$ but $\sol{\eig} \notin \pseudo{\mep{\eig}}$, so the reverse inclusion fails.
        \end{remark}

        \begin{theorem}
            The eigenvalues of a (nested) right definite rMEP are real. 
            If the coefficient matrices are all real, then there also exist corresponding real eigenvectors.
        \end{theorem}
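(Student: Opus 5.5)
The plan is to reduce the statement to the known result for nonsingular, right definite coupled square multiparameter eigenvalue problems, using the inclusion of spectra noted in the remark after \cref{lemma:smepintersection}.

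First, I will fix the $m$ row selection matrices $\mt{P}_1, \ldots, \mt{P}_m$ guaranteed by \cref{def:rightdefinite}. They form a nonsingular, right definite coupled square problem $\smep{\eig}\rvec = \mt{P}_i^\tr \mep{\eig} \rvec = \vc{0}$, $i = 1, \ldots, m$.

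Next, take an eigenvalue $\sol{\eig}$ of the rMEP with eigenvector $\sol{\rvec} \neq \vc{0}$, so $\mep{\sol{\eig}}\sol{\rvec} = \vc{0}$. Premultiplying by each $\mt{P}_i^\tr$ gives $\smep{\sol{\eig}}\sol{\rvec} = \vc{0}$ for every $i$, which is the first inclusion in the proof of \cref{lemma:smepintersection}. Hence $\smep{\sol{\eig}}$ is singular for each $i = 1, \ldots, m$, and $\sol{\eig}$ is an eigenvalue of the coupled square problem, with decomposable eigenvector $\sol{\rvec} \otimes \cdots \otimes \sol{\rvec}$.

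The cited result from \cite{atkinson1972multiparameter,hochstenbach2003backward} says that all eigenvalues of a nonsingular, right definite coupled problem are real. It follows that $\sol{\eig} \in \Rset^m$. This step is where the real content lies. The coupled problem here has a special structure: all $m$ equations share the same vector space $\Cset^\ell$ and share the eigenvector. That structure is harmless, since the Atkinson theory applies to any coupled problem with positive definite $\mt{\Delta}_0$. I will, however, make explicit that right definiteness includes the Hermitian (real symmetric) structure of the $\mt{V}_{ij} = \mt{P}_i^\tr \mat_j$. That structure is what the reality result actually uses, through the commuting Hermitian pencils $(\mt{\Delta}_j, \mt{\Delta}_0)$ with $\mt{\Delta}_0 > 0$.

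For the eigenvectors, assume all $\mat_i$ are real. Then $\mep{\sol{\eig}}$ is a real $k \times \ell$ matrix because $\sol{\eig}$ is real. Its right null space is nontrivial, and it is spanned by real vectors, for example from a real SVD or by real Gaussian elimination. Concretely, if $\sol{\rvec}$ is a complex null vector, then $\operatorname{Re}\sol{\rvec}$ and $\operatorname{Im}\sol{\rvec}$ are both real null vectors, and at least one of them is nonzero. That vector is a real eigenvector.

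I expect the main obstacle to be justifying that the cited reality result applies, that is, making sure the conditions in \cref{def:rightdefinite} really give Hermitian coefficient blocks together with $\mt{\Delta}_0 > 0$. If needed, I would sketch the standard argument. Let $\vc{z} = \sol{\rvec}^{\otimes m}$ and use the Cramer-type identity $\mt{\Delta}_0 \sol{\eigcomp[j]}\vc{z} = \mt{\Delta}_j \vc{z}$. Then $\sol{\eigcomp[j]} = \vc{z}^\herm \mt{\Delta}_j \vc{z} / \vc{z}^\herm \mt{\Delta}_0 \vc{z}$ is a quotient of real numbers, since both $\mt{\Delta}_j$ and $\mt{\Delta}_0$ are Hermitian and the denominator is positive.
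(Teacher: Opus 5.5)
Your proposal is correct and follows essentially the same route as the paper. The paper also places the rMEP spectrum inside the spectrum of the $m$-equation coupled square problem of \cref{def:rightdefinite}, using \cref{lemma:smepintersection} where you use only its easy inclusion, calls that spectrum real by citation, and then takes real null vectors of the real matrix at the eigenvalue. Your remark that right definiteness must include Hermitian blocks $P_i^T A_j$, and your quotient sketch $\lambda_j = z^H \Delta_j z / z^H \Delta_0 z$, make explicit what the paper leaves to the citation.
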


        \begin{proof}
            It follows from \cref{lemma:smepintersection} that the rMEP spectrum is the intersection of all submatrix spectra, which is a subset of the coupled square one (\cref{def:rightdefinite} gives $m$ of the $L$ submatrices), and the coupled square spectrum is real by~\cite{hochstenbach2003backward}, so the intersection is real.
            All eigenvalues of the (nested) right definite rMEP are thus real. 

            If the coefficient matrices of the rMEP are real, then $\mep{\sol{\eig}}$ is a real matrix and we can consider real eigenvectors. 
        \end{proof}

        \begin{remark}
            The requirement on the rMEP of being (nested) right definite is very restrictive. 
            We cannot find a general approach to construct rMEPs with $\ell > 2$ and/or $m > 2$ for which a corresponding right definite coupled square multiparameter eigenvalue problem exists. 
            The proof above is non-constructive in that regard.
            However, right definite two-parameter eigenvalue problems with $3 \times 2$ coefficient matrices do exist.  
            For example, when the coefficient matrices have a Hankel structure, which ensures that at least two submatrices are Hermitian, the rMEP is (nested) right definite. 
        \end{remark}

        \begin{example} 
            \label{ex:rightdefinite}

            Consider the right definite two-parameter eigenvalue problem
            \begin{equation}
                \label{eq:rightdefinite}
                \mep{\eig} \rvec = \left(\mat_0 + \eigcomp[1] \mat_1 + \eigcomp[2] \mat_2\right) \rvec = \vc{0},
            \end{equation}
            with coefficient matrices
            \begin{gather}
                \mat_0 = \begin{bmatrix}
                    0.0260 & 0.4380 \\
                    0.4380 & 0.6542 \\
                    0.6542 & 1.4192
                \end{bmatrix}, \mat_1 = \begin{bmatrix}
                    -1.6324 & \phantom{-}0.1128\\
                    \phantom{-}0.1128  & -0.4401\\
                    -0.4401 & -0.6968
                \end{bmatrix}, \eqand \\
                \mat_2 = \begin{bmatrix}
                    \phantom{-}3.3280 & -0.3346\\
                    -0.3346 & \phantom{-}0.6774\\
                    \phantom{-}0.6774 & -0.0754
                \end{bmatrix}.
            \end{gather}
            The secular equations of~\eqref{eq:rightdefinite} are obtained by taking the determinants of the $2 \times 2$ subpencils $\smep{\eig} = \mt{P}_i^\tr \mep{\eig}$, in which the selection matrices $\mt{P}_i$ select rows $\left\lbrace1, 2\right\rbrace$, $\left\lbrace2, 3\right\rbrace$, and $\left\lbrace1, 3\right\rbrace$, resulting in  
            \begin{align}
                \chi_{1}(\eig) & = \num{0.7057} \eigcomp[1]^2 - \num{2.4950} \eigcomp[1] \eigcomp[2] - \num{1.1782} \eigcomp[1] + \num{2.1424} \eigcomp[2]^2 + \num{2.4879} \eigcomp[2] - \num{0.1748} = 0, \\
                \chi_{2}(\eig) & = -\num{0.2723} \eigcomp[1]^2 + \num{0.8209} \eigcomp[1] \eigcomp[2] + \num{0.4307} \eigcomp[1] - \num{0.4336} \eigcomp[2]^2 - \num{1.3942} \eigcomp[2] + \num{0.1936} = 0, \\
                \chi_{3}(\eig) & = \num{1.1871} \eigcomp[1]^2 - \num{2.4195} \eigcomp[1] \eigcomp[2] - \num{2.2158} \eigcomp[1] - \num{0.0243} \eigcomp[2]^2 + \num{4.6433} \eigcomp[2] - \num{0.2496} = 0.
            \end{align}
            The $\epsilon$-pseudospectra and secular equations of the square polynomial submatrices $\smep{\eig}$, together with those of the rectangular matrix polynomial $\mep{\eig}$, are shown in \cref{fig:pseudospectra}.
            The $\epsilon$-pseudospectrum of the rMEP is a combination of the $\epsilon$-pseudospectra of the square submatrix polynomials, while the eigenvalues are situated at the intersections of the secular equations. 
            For the eigenvalue $\comp{\eig}_{(1)} = (\num{8.9620}, \num{4.3879})$, the condition number is $\num{89.5856}$; for $\comp{\eig}_{(2)} = (\num{0.4780}, \num{0.2982})$ it is $\num{48.4558}$; and for $\comp{\eig}_{(3)} = (\num{1.9804}, \num{0.1180})$ it is $\num{5.8383}$.
        \end{example}

        \begin{figure}[t]
            \centering
            \begin{subfigure}{0.45\textwidth}
                \centering
                \includegraphics{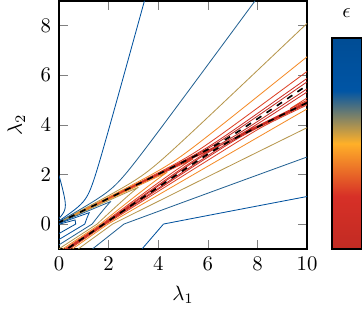}
                \caption{$\Lambda_{\epsilon}(\mcl{B}_1(\eig))$}
                \label{fig:pseudospectra:firstsubpicture}
            \end{subfigure}
            \begin{subfigure}{0.45\textwidth}
                \centering
                \includegraphics{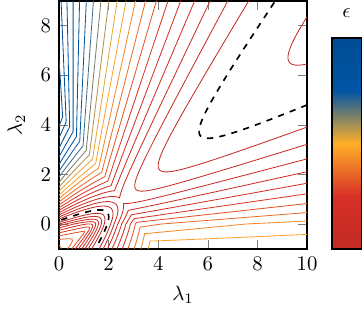}
                \caption{$\Lambda_{\epsilon}(\mcl{B}_2(\eig))$}
                \label{fig:pseudospectra:secondsubpicture}
            \end{subfigure}
            \begin{subfigure}{0.45\textwidth}
                \centering
                \bigskip
                \includegraphics{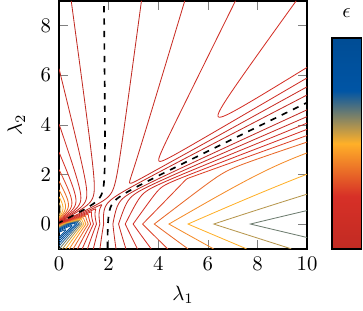}
                \caption{$\Lambda_{\epsilon}(\mcl{B}_3(\eig))$}
                \label{fig:pseudospectra:thirdsubpicture}
            \end{subfigure}
            \begin{subfigure}{0.45\textwidth}
                \centering
                \bigskip
                \includegraphics{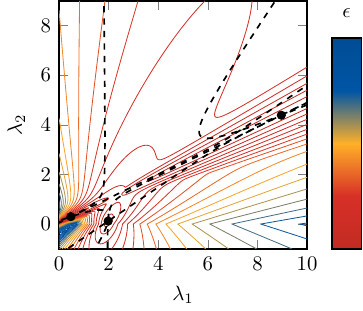}
                \caption{$\Lambda_{\epsilon}(\mep{\eig})$}
                \label{fig:pseudospectra:fullpicture}
            \end{subfigure}
            \caption{Secular equations and $\epsilon$-pseudospectra of the different matrix polynomials in \cref{ex:rightdefinite}. The spectrum of the rMEP $\mep{\eig}$ is obtained as the intersection of the spectra of the square submatrix polynomials $\smep{\eig}$, for $i = 1, 2, 3$. A similar relation holds for the $\epsilon$-pseudospectra. The secular curves (\ref{plot:secularequations}) for $\comp{\eig}_{(1)} = (\num{8.9620}, \num{4.3879})$ and $\comp{\eig}_{(2)} = (\num{0.4780}, \num{0.2982})$ are locally almost parallel, while this is not the case for $\comp{\eig}_{(3)} = (\num{1.9804}, \num{0.1180})$, resulting in a significantly lower condition number; another illustration of \cref{prop:intersection}.}
            \label{fig:pseudospectra}
        \end{figure}

    \subsection{Numerical computation of the pseudospectrum}
        \label{sec:pseudocomputation}

        Computing the $\epsilon$-pseudospectrum typically resorts on the second characterization in \cref{thm:equivalences}.
        For a given tuple $\comp{\eig} \in \Cset^m$, one evaluates the smallest singular value of the rectangular matrix $\mep{\comp{\eig}}$. 
        Then, $\comp{\eig} \in \pseudo{\mep{\eig}}$ if and only if $\sigma_\mathrm{min}\mleft(\mep{\comp{\eig}}\mright) \leq \epsilon \comp{\gamma}$.
        The numerical computation of the $\epsilon$-pseudospectrum reduces to evaluating
        $\sigma_\mathrm{min}\mleft(\mep{\eig}\mright)$ over a prescribed grid in the spectral parameter space $\Cset^m$.

        For a generalized rectangular matrix pencil with a single parameter, the standard approach is to first transform the matrix pencil by unitary equivalences into a trapezoidal form~\cite{trefethen2005spectra,wright2002pseudospectra}. 
        Since unitary transformations preserve the singular values, the preprocessing does not alter the pseudospectrum. 
        Its purpose is purely computational: 
        Once the trapezoidal form has been obtained, the subsequent QR factorization is cheaper and the minimum singular value can be computed more efficiently. 
        Instead of evaluating the original rectangular matrix pencil directly at each grid point, one evaluates an equivalent matrix pencil with a more favorable structure. 

        The proposed approach to numerically compute the $\epsilon$-pseudospectra of rMEPs is based on the computation of the pseudospectrum for a one-parameter matrix pencil. 
        In what follows, we describe the required preprocessing steps and the corresponding computational complexity of the methods in~\cite{trefethen2005spectra,wright2002pseudospectra} when used for multiparameter pseudospectrum computations.  
        The construction of this trapezoidal form depends on the relation between $k$ and $\ell$: it can be slightly tall ($k < 2 \ell$) or very tall ($k \geq 2 \ell$).            
        In both cases, the essential point is that the $\epsilon$-pseudospectrum is computed not through a simultaneous reduction of all coefficient matrices, but rather by fixing all but one parameter and treating the resulting slice as an one-parameter rectangular pencil. 
        The full multiparameter pseudospectrum is then obtained by repeating this computation over all slices of the parameter grid.\footnote{The function \function{mppseudo} provides an implementation of such an algorithm. Although different methods can be used to compute the one-parameter problems, we resort on \toolbox{EigTool}~\cite{wright2002eigtool} for that.}


        \subsubsection{Slightly tall case}

            In the slightly tall case, $k < 2 \ell$, the required structure in the one-parameter setting is obtained by means of a QZ decomposition of the lower $\ell \times \ell$ square block of the two rectangular matrices $\mat_0$ and $\mat_1$.  

            No analogous decomposition exists for more than two matrices simultaneously, so, for an $m$-parameter eigenvalue problem, the $\epsilon$-pseudospectrum must be computed slice-wise.
            More precisely, let each parameter $\eigcomp$ vary over a rectangular complex grid
            $G_i \subset \Cset$ of the form 
            \begin{equation}
                G_i = \left\lbrace x + \cmp y : x \in (\alpha_i, \beta_i), y \in (\eta_i, \zeta_i), \alpha_i,\beta_i, \eta_i, \zeta_i \in \Rset\right\rbrace,
            \end{equation}
            discretized by points in both the real and imaginary direction. 
            To compute the pseudospectrum over the full parameter grid $G_1 \times G_2 \times \cdots \times G_m$, first fix $(\comp{\eigcomp[1]}, \comp{\eigcomp[2]}, \ldots, \comp{\eigcomp[m - 1]}) \in G_1 \times G_2 \times \cdots \times G_{m-1}$.
            For each such choice, consider the sliced linear matrix pencil $(\comp{\eigcomp[1]}, \comp{\eigcomp[2]}, \ldots, \comp{\eigcomp[m - 1]}, \eigcomp[m])$ as a one-parameter rectangular matrix pencil in the remaining variable $\eigcomp[m]$.
            The one-parameter pseudospectrum algorithm for the generalized pencil with $k < 2 \ell$ is then applied to compute the pseudospectrum over $\lambda_m \in G_m$.
            Repeating this procedure for all slices, $(\eigcomp[1], \eigcomp[2], \ldots, \eigcomp[m - 1]) \in G_1 \times G_2 \times \cdots \times G_{m-1}$, yields the pseudospectrum on the full multiparameter grid $G_1 \times G_2 \times \cdots \times G_m$.

            Writing $N_\mathrm{slice} \coloneq \prod_{i = 1}^{m - 1} \abs{G_i}$ for the number of slices and $N_m \coloneq \abs{G_m}$ for the number of grid points in the remaining parameter, the computational complexity for the ``sligthly tall case'' is
            \begin{equation}
                \complexity{N_\mathrm{slice} \ell^3 + N_\mathrm{slice} N_m (k - \ell) \ell^2 +N_\mathrm{slice} N_m \ell^2}.
            \end{equation}
            The first term corresponds to the QZ reduction on the lower $\ell \times \ell$ subsection for each slice, the second term to the banded QR factorizations, whose bandwidth is $k - \ell$, and the third term to the inverse Lanczos iterations used to determine the minimum singular value.

        \subsubsection{Very tall case}

            When considering the ``very tall case'', $k \geq 2 \ell$, we again fix $m - 1$ parameters and reduce the problem to an one-parameter problem via slicing. 
            In this case, we first apply a QR factorization to the coefficient matrix corresponding to the unfixed parameter, thereby creating an upper triangular structure. 
            The QR factorization of the $k \times \ell$ coefficient matrix has a complexity of $\complexity{k \ell^2}$ and is performed only once, since for every slice $(\comp{\eigcomp[1]}, \comp{\eigcomp[2]}, \cdots, \comp{\eigcomp[m - 1]}) \in G_1 \times G_2 \times \cdots \times G_{m - 1}$, the one-parameter matrix pencil $(\comp{\eigcomp[1]}, \comp{\eigcomp[2]}, \ldots, \comp{\eigcomp[m - 1]}, \eigcomp[m])$ is multiplied by $\mt{Q}^\herm \in \Cset^{k \times k}$ to recover the upper triangular form. 
            Next, for every slice, $(\comp{\eigcomp[1]}, \comp{\eigcomp[2]}, \cdots, \comp{\eigcomp[m - 1]}) \in G_1 \times G_2 \times \cdots \times G_{m - 1}$, a QR factorization is applied to the lower rows of the remaining slice-dependent matrix in order to obtain the desired trapezoidal structure, resulting in $\prod_{i = 1}^{m - 1} \abs{G_i}$ QR factorizations of cost $\complexity{(k - \ell) \ell^2}$.
            Finally, for each grid point $\lambda_m \in G_m$, a banded QR factorization with $\complexity{\ell^3}$ floating-point operations is carried out before the inverse Lanczos iteration is used to determine the smallest singular value.
            The total computational complexity adds up to
            \begin{equation}
                \complexity{k \ell^2 + N_\mathrm{slice} (k - \ell) \ell^2 + N_\mathrm{slice} (k - \ell) \ell^2 + N_\mathrm{slice} N_m \ell^3 + N_\mathrm{slice} N_m \ell^2}.
            \end{equation}

\section{Perturbation theory in an application}
    \label{sec:application}

    Several system identification applications can be reformulated so that their globally optimal solution correspond to one eigenvalue of an rMEP.
    The least squares realization problem below is a prototypical example: 
    The first-order necessary conditions for optimality of the least squares minimization problem form a multivariate polynomial system, the common roots of which correspond to the eigenvalue of the associated rMEP~\cite{demoor2019least}, and the globally optimal solution is given by the eigenvalue that results into the minimal misfit.
    Here, we apply the concepts from the rectangular multispectral perturbation theory developed in the previous sections to such problems, and we report an empirical pattern via \cref{conj:optimality}, which links global optimality and eigenvalue conditioning.

    Given a sequence of output data $\vc{y} \in \Rset^N$, the least squares realization problem exists in finding (real) model parameters $\alpha_i \in \Rset$ and a slightly modified sequence of model-compliant output data $\vctilde{y} \in \Rset^N$ so that the model structure
    \begin{equation}
        \widehat{y}_k + \alpha_1 \widehat{y}_{k - 1} + \cdots + \alpha_m \widehat{y}_{k - m} = 0
    \end{equation}
    holds for all the modified data points $\widehat{y}_k$, thus for $k = m + 1, m + 2, \ldots, N$.
    The globally optimal model has parameters $\alpha_i$ such that the modification is minimal in the least squares sense, i.e., $\norm{\vchat{y} - \vc{y}}$ is minimal. 
    The corresponding optimization problem,
    \begin{equation}
        \begin{gathered}
            \min_{\vc{\alpha}, \vchat{y}} \frac{1}{2} \norm{\vchat{y} - \vc{y}} \\
            \sub \widehat{y}_k + \alpha_1 \widehat{y}_{k - 1} + \cdots + \alpha_m \widehat{y}_{k - m} = 0, \quad k = m + 1, m + 2, \ldots, N, 
        \end{gathered}
    \end{equation}
    can be solved by rephrasing it as an rMEP, the stationary points $(\vc{\alpha}, \vchat{y})$ constitute its eigenpairs $(\eig, \rvec)$, as explained in~\cite{demoor2019least}.

    \begin{example}
        \label{ex:application}

        We consider a model with two parameters $\alpha_1 = -0.5$ and $\alpha_2 = -0.5$. 
        Let the given sequence of output data be
        \begin{equation}
            \vc{y} = \begin{bmatrix}
                2.0000 \\
                4.0000 \\
                3.0000 \\
                3.5000 \\
                3.2500 
            \end{bmatrix}.
        \end{equation}
        Since we seek the model parameters, only real solutions are of interest.
        The computed real eigenvalues are listed in \cref{tab:application}, together with the norm-wise backward eigenvalue errors, norm-wise relative eigenvalue condition numbers, and cost values.
        The type of each real stationary point is determined from the cost function, visualized in \cref{fig:application:costfunction}.

        The condition numbers of the global optima (both the maxima and the minimum) appear to be lower than those of the other stationary points.
        This behavior is preferable, since we are primarily interested in the optimal models (here, the minimum misfit model).
        In particular, the global minimum is among the best-conditioned eigenvalues.
        Notice that the condition number of $\comp[(5)]{\eig}$, which is a saddle point far from the data-generating parameters, is several orders of magnitude larger than that of the other real stationary points.
        In the pseudospectrum (\cref{fig:application:pseudospectrum}), the pocket around this eigenvalue is more stretched, indicating a tangential intersection of the secular equations.
        As a side note, the condition numbers of the complex stationary points are also larger than those of the real optima.
    \end{example}

    \begin{table}[t]
        \centering
        \caption{Real stationary points of the least squares realization problem in \cref{ex:application}, expressed as eigenvalues $\comp{\eig}{(i)} = (\alpha_1, \alpha_2)$ of the associated rMEP. For each solution we report the eigenvalue condition number $\cond{\eig}$, the cost $\norm{\vctilde{y}} = \norm{\vchat{y} - \vc{y}}$, the backward error $\backward{\eig}$, and the type of stationary point. The global minimum at the data-generating parameters $(\alpha_1, \alpha_2) = (-0.5, -0.5)$ attains zero cost; the saddle point $\sol{\eig}{(5)}$, which lies far from those parameters, has a condition number several orders of magnitude larger than the remaining real stationary points, consistent with \cref{conj:optimality}.}
        \label{tab:application}
        \begin{tabular}{r|cccccc}
            \toprule
            & $\alpha_1$ & $\alpha_2$ & $\backward{\eig}$ & $\cond{\eig}$ & $\norm{\vctilde{y}}$ & type \\
            \midrule
            $\comp{\eig}_{(1)}$ & $\num{-0.5000}$ & $\num{-0.5000}$ & $\num{6.5e-14}$ & $\num{1.5e2}$ & $\phantom{0}\num{0.0000}$ & minimum \\
            $\comp{\eig}_{(2)}$ & $\num{-0.4506}$ & $\phantom{-}\num{0.9892}$ & $\num{2.5e-16}$ & $\num{1.0e2}$ & $\num{51.8125}$ & maximum \\
            $\comp{\eig}_{(3)}$ & $\phantom{-}\num{1.5275}$ & $\phantom{-}\num{0.6221}$ & $\num{6.5e-14}$ & $\num{1.7e2}$ & $\num{51.8125}$ & maximum \\
            $\comp{\eig}_{(4)}$ & $\phantom{-}\num{0.7851}$ & $\phantom{-}\num{0.7888}$ & $\num{5.0e-16}$ & $\num{2.6e2}$ & $\num{45.4285}$ & saddle point \\
            $\comp{\eig}_{(5)}$ & $\phantom{-}\num{5.7691}$ & $\num{-7.5333}$ & $\num{5.8e-16}$ & $\num{6.0e5}$ & $\phantom{0}\num{1.6429}$ & saddle point \\
            \bottomrule
        \end{tabular}
    \end{table}

    \begin{figure}
        \centering
        \begin{subfigure}{0.45\textwidth}
            \centering
            \includegraphics{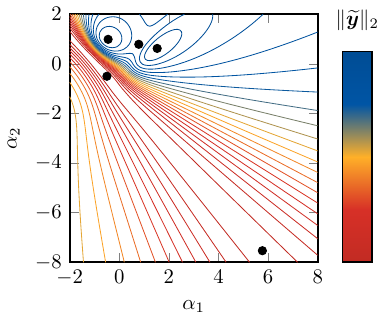}
            \caption{$\norm{\vctilde{y}} = \norm{\vchat{y} - \vc{y}}$}
            \label{fig:application:costfunction}
        \end{subfigure}
        \begin{subfigure}{0.45\textwidth}
            \centering
            \includegraphics{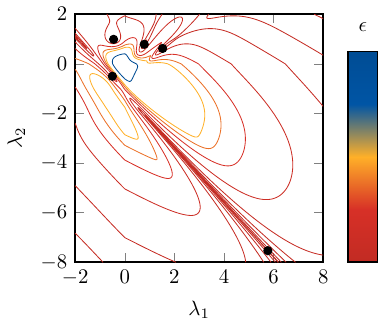}
            \caption{$\pseudo{\mep{\eig}}$}
            \label{fig:application:pseudospectrum}
        \end{subfigure}
        \caption{Contour lines of the cost function and pseudospectrum of the corresponding rMEP for the system identification application in \cref{ex:application}. The real stationary points are marked in both figures (~\ref{plot:solution}~). From the contour lines in \cref{fig:application:costfunction}, their type can be deducted, while the size of the pseudospectrum pockets in \cref{fig:application:pseudospectrum} gives an idea of the conditioning of every eigenvalue that corresponds to a stationary point.}
        \label{fig:application}
    \end{figure}

    Similar conclusions can be drawn when the given output data is corrupted with noise and is no longer model compliant.
    Motivated by the previous example and similar experiences for the globally optimal identification of autoregressive moving-average model parameters (a latency minimization problem)~\cite{vermeersch2019globally}, we give the following conjecture without proof and with some caution.

    \begin{conjecture}
        \label{conj:optimality}
        Consider the least squares optimization problem arising from linear time-invariant system identification, in which the stationary points correspond to the eigenvalues of an rMEP obtained via the first-order optimality conditions.
        The global optima of the underlying optimization problem are among the best-conditioned eigenvalues of the rMEP.
    \end{conjecture}

    When solving rMEPs in system identification, \cref{conj:optimality} suggests that the solutions of interest are well-conditioned, which is a favorable property.

\section{Conclusion and future work}
    \label{sec:conclusion}

    We provided a first systematic treatment of perturbation theory for the rectangular multiparameter eigenvalue problem (rMEP).
    By using perturbed coefficient matrices, we derived useful expressions to compute norm-wise backward errors and condition numbers for its eigenvalues and eigenvectors.
    Moreover, we analyzed the multiparameter pseudospectrum, which is intimately connected to eigenvalue sensitivity.
    Numerical examples illustrated and linked the different concepts.

    We showed that the pseudospectrum offers a global counterpart to the local sensitivity captured by the eigenvalue condition number.
    By \cref{cor:singularvalues}, the pseudospectrum directly visualizes how rapidly the minimal singular value grows away from each eigenvalue solution.
    Near a well-conditioned eigenvalue, the minimum singular value rises steeply and the pseudospectrum forms a tight pocket around the solution; near an ill-conditioned eigenvalue, the opposite occurs.
    The local rate is precisely what the condition number measures via \cref{prop:intersection}: when the secular equations intersect at a small angle, the auxiliary matrix from \cref{lemma:genericity} is nearly singular, the condition number is large, and the pocket of the pseudospectrum is correspondingly fat.
    The two descriptions---local by the condition number and global by the pseudospectrum---are thus two views of the same phenomenon, and the examples illustrated how all concepts in the paper are connected.

    Although we have attempted to give a thorough treatment of rectangular multiparameter perturbation theory, several research directions remain open for future work.
    Alternative norms, such as mixed subordinate $\alpha, \beta$ matrix norms~\cite{higham1998structured} and the chordal norm~\cite{stewart1990matrix}, may lead to a cleaner or more insightful analysis.
    Spherical projections of the pseudospectrum in the spirit of~\cite{higham2002more} offer another natural extension to our approach.
    Finally, certification techniques for multiparameter eigenvalues appear promising and deserve further investigation.

    \bibliographystyle{plain}
    \bibliography{bibliography}

    \appendix
\crefalias{section}{appendix}
\noindent{\Large\bfseries Appendices} \\

\noindent The paper comes with two appendices: an overview of the left null space and left eigenvector for rectangular eigenvalue problems (\cref{app:leftnullspace}) and an introduction to perturbation theory (\cref{app:perturbationtheory}).

\section{On the left eigenvectors of the matrix polynomial}
    \label{app:leftnullspace}

    Since the coefficient matrices are rectangular, there exist polynomial vectors $\lvec(\eig)$ such that $\lvec(\eig)^\herm \mep{\eig} \equiv \vc{0}$ for any $\eig \in \Cset^m$. 
    These vectors form the trivial left null space of the matrix polynomial $\mep{\eig}$, which has dimension $m - 1$ for the standard case where $k = l + m - 1$.
    As discussed in~\cite{khazanov1998spectral}, there exist a polynomial basis vectors $\lvec_i(\eig)$, for $i = 1, \ldots, m - 1$, for the trivial left null space.
  
    \begin{figure}
        \centering
        \tikzexternalenable
        \begin{tikzpicture}
    \begin{axis}[
        figurestyle,
        xmin = 0,
        xmax = 2,
        ymin = -1,
        ymax = 1,
        xlabel = {$t$},
        ylabel = {$\lvec(3)$}
    ]

        \addplot[blueline] table [row sep = crcr]{
            0	0.904534033733291\\
            0.02	0.905643538831005\\
            0.04	0.906779825994907\\
            0.06	0.90794379259295\\
            0.08	0.909136369952471\\
            0.1	0.910358524267297\\
            0.12	0.911611257421171\\
            0.14	0.912895607698934\\
            0.16	0.914212650350804\\
            0.18	0.915563497967638\\
            0.2	0.916949300616178\\
            0.22	0.918371245672467\\
            0.24	0.919830557278678\\
            0.26	0.921328495332824\\
            0.28	0.922866353901859\\
            0.3	0.924445458925614\\
            0.32	0.926067165051089\\
            0.34	0.927732851402656\\
            0.36	0.929443916052467\\
            0.38	0.931201768904959\\
            0.4	0.933007822647968\\
            0.42	0.934863481347774\\
            0.44	0.936770126173357\\
            0.46	0.938729097622154\\
            0.48	0.940741673480667\\
            0.5	0.942809041582063\\
            0.52	0.944932266211595\\
            0.54	0.947112246749232\\
            0.56	0.949349666814963\\
            0.58	0.951644931779876\\
            0.6	0.953998092005724\\
            0.62	0.956408748551914\\
            0.64	0.958875937310276\\
            0.66	0.961397986554589\\
            0.68	0.963972341673418\\
            0.7	0.966595349328283\\
            0.72	0.969261991365655\\
            0.74	0.971965556412608\\
            0.76	0.97469723408159\\
            0.78	0.9774456129529\\
            0.8	0.980196058819607\\
            0.82	0.982929943867198\\
            0.84	0.985623690297673\\
            0.86	0.988247583172436\\
            0.88	0.990764296755919\\
            0.9	0.993127066322842\\
            0.92	0.995277423429141\\
            0.94	0.997142397714068\\
            0.96	0.998631073964667\\
            0.98	0.99963038254143\\
            1.02	0.99956625458814\\
            1.04	0.998114984186316\\
            1.06	0.995383406994079\\
            1.08	0.991051274184318\\
            1.1	0.984731927834662\\
            1.12	0.975964444181277\\
            1.14	0.964208851210044\\
            1.16	0.948847472716111\\
            1.18	0.929196646622465\\
            1.2	0.904534033733291\\
            1.22	0.874146757476248\\
            1.24	0.83740361432108\\
            1.26	0.793849441986747\\
            1.28	0.743311116239434\\
            1.3	0.685994340570035\\
            1.32	0.622543017479467\\
            1.34	0.554034688851555\\
            1.36	0.48189987357506\\
            1.38	0.407776599618351\\
            1.4	0.333333333333334\\
            1.42	0.260102435504942\\
            1.44	0.189358320929341\\
            1.46	0.122055639539471\\
            1.48	0.0588235294117645\\
            1.5	-0\\
            1.52	-0.0543125446593571\\
            1.54	-0.104186452214125\\
            1.56	-0.149812850831677\\
            1.58	-0.19145550896473\\
            1.6	-0.229415733870562\\
            1.62	-0.264007392310479\\
            1.64	-0.295540231644524\\
            1.66	-0.324309518712743\\
            1.68	-0.350590224209229\\
            1.7	-0.374634324632678\\
            1.72	-0.39667014528604\\
            1.74	-0.416902970610721\\
            1.76	-0.435516386612325\\
            1.78	-0.452673997529931\\
            1.8	-0.468521285665818\\
            1.82	-0.483187471111309\\
            1.84	-0.496787287358783\\
            1.86	-0.509422627782115\\
            1.88	-0.521184042945983\\
            1.9	-0.532152084190191\\
            1.92	-0.542398498111707\\
            1.94	-0.551987281629653\\
            1.96	-0.560975609756098\\
            1.98	-0.569414649003076\\
            2	-0.577350269189626\\
            };
            \label{plot:symbolical}

        \addplot[redmark, only marks, mark = *, mark repeat = 3, mark phase = 3] table [row sep = crcr]{
            0	0.904534033733291\\
            0.02	0.905643538831005\\
            0.04	0.906779825994906\\
            0.06	0.90794379259295\\
            0.08	0.909136369952471\\
            0.1	0.910358524267297\\
            0.12	0.911611257421171\\
            0.14	0.912895607698934\\
            0.16	0.914212650350804\\
            0.18	0.915563497967639\\
            0.2	0.916949300616178\\
            0.22	0.918371245672467\\
            0.24	0.919830557278678\\
            0.26	0.921328495332824\\
            0.28	0.922866353901859\\
            0.3	0.924445458925614\\
            0.32	0.926067165051089\\
            0.34	0.927732851402656\\
            0.36	0.929443916052467\\
            0.38	0.931201768904959\\
            0.4	0.933007822647968\\
            0.42	0.934863481347774\\
            0.44	0.936770126173357\\
            0.46	0.938729097622154\\
            0.48	0.940741673480667\\
            0.5	0.942809041582063\\
            0.52	0.944932266211595\\
            0.54	0.947112246749232\\
            0.56	0.949349666814963\\
            0.58	0.951644931779875\\
            0.6	0.953998092005724\\
            0.62	0.956408748551914\\
            0.64	0.958875937310276\\
            0.66	0.961397986554589\\
            0.68	0.963972341673418\\
            0.7	0.966595349328283\\
            0.72	0.969261991365655\\
            0.74	0.971965556412608\\
            0.76	0.974697234081589\\
            0.78	0.9774456129529\\
            0.8	0.980196058819607\\
            0.82	0.982929943867198\\
            0.84	0.985623690297673\\
            0.86	0.988247583172436\\
            0.88	0.990764296755919\\
            0.9	0.993127066322841\\
            0.92	0.995277423429141\\
            0.94	0.997142397714068\\
            0.96	0.998631073964667\\
            0.98	0.99963038254143\\
            1	0\\
            1.02	0.99956625458814\\
            1.04	0.998114984186316\\
            1.06	0.995383406994079\\
            1.08	0.991051274184318\\
            1.1	0.984731927834662\\
            1.12	0.975964444181277\\
            1.14	0.964208851210044\\
            1.16	0.948847472716111\\
            1.18	0.929196646622466\\
            1.2	0.904534033733292\\
            1.22	0.874146757476247\\
            1.24	0.83740361432108\\
            1.26	0.793849441986747\\
            1.28	0.743311116239435\\
            1.3	0.685994340570034\\
            1.32	0.622543017479467\\
            1.34	0.554034688851554\\
            1.36	0.48189987357506\\
            1.38	0.407776599618352\\
            1.4	0.333333333333334\\
            1.42	0.260102435504942\\
            1.44	0.189358320929341\\
            1.46	0.122055639539471\\
            1.48	0.0588235294117648\\
            1.5	-0\\
            1.52	-0.0543125446593569\\
            1.54	-0.104186452214125\\
            1.56	-0.149812850831677\\
            1.58	-0.19145550896473\\
            1.6	-0.229415733870562\\
            1.62	-0.264007392310478\\
            1.64	-0.295540231644524\\
            1.66	-0.324309518712743\\
            1.68	-0.350590224209229\\
            1.7	-0.374634324632678\\
            1.72	-0.39667014528604\\
            1.74	-0.416902970610721\\
            1.76	-0.435516386612325\\
            1.78	-0.452673997529931\\
            1.8	-0.468521285665818\\
            1.82	-0.483187471111309\\
            1.84	-0.496787287358783\\
            1.86	-0.509422627782116\\
            1.88	-0.521184042945983\\
            1.9	-0.532152084190191\\
            1.92	-0.542398498111707\\
            1.94	-0.551987281629653\\
            1.96	-0.560975609756098\\
            1.98	-0.569414649003076\\
            2	-0.577350269189626\\
        };
        \label{plot:numerical}

        \addplot[solution] coordinates{
            (1, 0) 
            (1, 1)
        };
        \label{plot:solution}
    \end{axis}
\end{tikzpicture} 
        \tikzexternaldisable
        \caption{Visualization of the third component of the left null space $\mt{Y}$ for a parameter $t$, which parametrizes the eigenvalue as $\eig = \left(t, t\right)$. In $t = 1$, the parametrization corresponds to one of the eigenvalue solutions, i.e., $\sol{\eig} = (1, 1)$. Both the symbolically obtained polynomial vectors (\ref{plot:symbolical}) and the numerically obtained left null space vectors (~\ref{plot:numerical}~) are shown. The symbolic expression for $r(t)$ in~\eqref{eq:symbolicleftnullspace} is normalized so that the vector in~\eqref{eq:symbolicleftnullspace} has norm one. For an eigenvalue, which is here in $t = 1$, the rank of $\mep{\eig}$ drops and the dimension of the left null space equals two. The two components for the eigenvalue solution are highlighted in black (~\ref{plot:solution}~).}
        \label{fig:leftnullspace}
    \end{figure}

    For an eigenvalue $\sol{\eig}$, $\sol{\rho}$ additional vectors in the left null space appear, which we call the left eigenvectors $\sol{\lvec} \in \Cset^k$.
    When the eigenvalues are simple (as is assumed in the paper), there is exactly one such left eigenvector.

    \begin{example}
        The problem in \cref{ex:runningexample} also has a trivial vector in the left null space:
        \begin{equation}
            \lvec^\herm = 
            \begin{bmatrix}
                p(\eigcomp[1], \eigcomp[2]) & q(\eigcomp[1], \eigcomp[2]) & r(\eigcomp[1], \eigcomp[2])
            \end{bmatrix},
        \end{equation}
        where it is possible to determine that these polynomial vectors are
        \begin{equation}
            \label{eq:symbolicleftnullspace}
            \begin{aligned}
                p(\eigcomp[1], \eigcomp[2]) &= -2 (\eigcomp[1] - 1) (\eigcomp[2] - 1), \\
                q(\eigcomp[1], \eigcomp[2]) &= -3 \eigcomp[1] + 7 \eigcomp[2] - \eigcomp[1] \eigcomp[2] + \eigcomp[1]^2 - 2 \eigcomp[2]^2 - 2, \\
                r(\eigcomp[1], \eigcomp[2]) &= 2 (\eigcomp[2] - 1) (\eigcomp[1] + \eigcomp[2] - 3) 
            \end{aligned}
        \end{equation}
        In the eigenvalues, the dimension of the left null space equals two and there exists a true left eigenvector. 
        The different components of the left null space are visualized in \cref{fig:leftnullspace} for the eigenvalue varying from one solution to another.
    \end{example}

    While the polynomial vectors in the above-mentioned example have been obtained by performing symbolic computations by hand, there exist more systematic approaches, for example, the ones described in~\cite{khazanov2000generating}.

\section{On the backward error and condition number}
    \label{app:perturbationtheory}

    We collect the standard definitions of perturbation theory here, so that it is clear from which definitions the analysis in the paper starts.
    More information can be found in detailed textbooks, such as~\cite{trefethen1997numerical}.

    Let $f: X \to Y$ be a map between normed vector spaces, modeling the problem of mapping input data $x \in X$ to a solution $y = f(x) \in Y$.
    Similarly, an algorithm can be viewed as another map $\widetilde{f}: X \to Y$ between the same two spaces.
    We say that an algorithm $\widetilde{f}$ for a problem $f$ is backward stable if for each $x \in X$ the backward error is of the order of machine precision.
    \begin{definition}
        Let $\widetilde{f}$ be an algorithm for the problem $f$.
        The \emph{norm-wise relative backward error} of $\widetilde{f}$ at $x \in X$ is
        \begin{equation}
            \label{eq:backwarderror}
            \eta \mleft( x \mright) = \inf\left\lbrace \frac{\mleft\Vert \Delta x \mright\Vert}{\mleft\Vert x \mright\Vert} : \widetilde{f}(x) = f(x + \err x)\right\rbrace,
        \end{equation}
        with the convention that $\eta\mleft(x\mright) = \infty$ when no such $\err x$ exists.
        By removing $\left\Vert x \right\Vert$ from the denominator, $\eta\mleft(x\mright)$ is the \emph{norm-wise absolute backward error}.
    \end{definition}

    The condition number of a function measures how much the output value of the function can change for a small change in the input argument. 
    \begin{definition}
        Let $f$ be a problem and $x \in X$ a point at which $f$ is defined.
        The \emph{norm-wise absolute and relative condition numbers} of $f$ at $x$ are
        \begin{gather}
            \widehat{\kappa} \mleft( x \mright) = \lim_{\delta \to 0} \sup_{\mleft\Vert \delta x \mright\Vert \leq \delta} \frac{\mleft\Vert \delta f \mright\Vert}{\mleft\Vert \delta x \mright\Vert}, \\
            \kappa \mleft( x \mright) = \lim_{\delta \to 0} \sup_{\mleft\Vert \delta x \mright\Vert \leq \delta} \mleft( \frac{\mleft\Vert \delta f \mright\Vert}{\mleft\Vert f (x) \mright\Vert} / \frac{\mleft\Vert \delta x \mright\Vert}{\mleft\Vert x \mright\Vert} \mright).
        \end{gather}
        When $f$ is multi-valued, the condition number is defined relative to a fixed smooth branch.
    \end{definition}
    
    The condition number may depend strongly on $x$, meaning that some instances of the problem are more sensitive to perturbations than others.
    Both absolute and relative condition numbers have their uses, but the latter are more important in numerical analysis because the floating-point arithmetic used by computers introduces relative errors rather than absolute ones.

\end{document}